\newtheorem{theorem}{Theorem}[section]
\newtheorem{lemma}[theorem]{Lemma}
\newtheorem{example}{Example}[section]
\newtheorem{remark}{Remark}
\newcolumntype{R}[1]{>{\raggedleft\let\newline\\\arraybackslash\hspace{0pt}}m{#1}}
\newcommand{\clr}{\mbox{clr}}
\title{Compositional splines for representation of density functions}
\author{J. Machalov\'a$^{a}$, R. Talsk\'a$^{a\ast}$, K. Hron$^a$, A. G\'aba$^b$ \\
$^{a}$Department of Mathematical Analysis and Applications of Mathematics, \\ Faculty of Science, Palack\'y University Olomouc, \\ 17. listopadu 12, CZ-77146 Olomouc, Czech Republic \\ 
$^{b}$Department of Natural Sciences in Kinanthropology, \\ Faculty of Physical Culture, Palack\'y University Olomouc, \\ t\v r. M\'iru 117, CZ-77111 Olomouc, Czech Republic \\
$^\ast$Corresponding author. Email: talskarenata@seznam.cz
}
\begin{document} 

\maketitle

\begin{abstract}
In the context of functional data analysis, probability density functions as non-negative functions are characterized by specific properties of scale invariance and relative scale which enable to represent them with the unit integral constraint without loss of information. On the other hand, all these properties are a challenge when the densities need to be approximated with spline functions, including construction of the respective spline basis. The Bayes space methodology of density functions enables to express them as real functions in the standard $L^2$ space using the centered log-ratio transformation. The resulting functions satisfy the zero integral constraint. This is a key to propose a new spline basis, holding the same property, and consequently to build a new class of spline functions, called compositional splines, which can approximate probability density functions in a consistent way. The paper provides also construction of smoothing compositional splines and possible orthonormalization of the spline basis which might be useful in some applications. Finally, 
statistical processing of densities using the new approximation tool is demonstrated in case of simplicial functional principal component analysis with anthropometric data.
\paragraph{Keywords:} spline representation, spline with zero integral, compositional spline, smoothing spline, simplicial functional principal component analysis
\end{abstract}

\section{Introduction}


Probability density functions are popularly known as non-negative functions satisfying the unit integral constraint. This clearly inhibits their direct
processing using standard methods of functional data analysis \cite{ramsay05} since unconstrained functions are assumed there. The same holds also for approximation of the raw input data using splines which is commonly considered to be a key step in functional data analysis. But more severely, in addition to the apparent unit integral constraint of densities which might seem to represent just a kind of numerical obstruction, density
functions are rather characterized by deeper geometrical properties that need to be taken into account for any reliable analysis
\cite{egozcue06,boogaart10,boogaart14}. Specifically, in contrast to functions in the standard $L^2$ space, densities obey the scale invariance and
relative scale properties \cite{hron16}. Scale invariance means that not just the representation of densities with the unit integral constraint, but any its positive multiple conveys the same information about relative contributions of Borel sets on the whole probability mass. Relative scale can be
explained directly with an example: the relative increase of a probability over a Borel set from 0.05 to 0.1 (2 multiple) differs from the increase
0.5 to 0.55 (1.1 multiple), although the absolute differences are the same in both cases. If we restrict to a bounded support
$I=[a,b]\subset\mathbb{R}$ that is mostly used in practical applications \cite{delicado11,hron16,menafoglio14,menafoglio16}, density functions can be
represented with respect to Lebesgue reference measure using the Bayes space $\mathcal{B}^2(I)$ of functions with square-integrable logarithm
\cite{egozcue06,boogaart14}.

The Bayes space $\mathcal{B}^2(I)$ has structure of separable Hilbert space that enables construction of an isometric isomorphism between
$\mathcal{B}^2(I)$ and $L^2(I)$, the $L^2$ space restricted to $I$. Accordingly, analogies of summing two functions and multiplication of a function
by a real scalar in the $L^2$ space together with an inner product between two densities are required. Given two absolutely integrable density
functions $f,g\in \mathcal{B}^2(I)$ and a real number $\alpha\in\mathbb{R}$ we indicate with $f\oplus g$ and $\alpha\odot f$ the \textit{perturbation} and
\textit{powering} operations, defined as
\begin{equation}
\label{operations}
(f\oplus g)(x)=\frac{f(x)g(x)}{\int_If(y)g(y)\,\mathrm{d}y},\quad (\alpha\odot f)(x)=\frac{f(x)^{\alpha}}{\int_If(y)^{\alpha}\,\mathrm{d}y}, \quad
x\in I,
\end{equation}
respectively. The resulting functions are readily seen to be probability density functions, though, note that the unit integral constraint representation was
chosen just for the sake of convenient interpretation. In \cite{egozcue06}, it is proven that $\mathcal{B}^2(I)$ endowed with the operations
$(\oplus,\odot)$ is a vector space. Note that the neutral elements of perturbation and powering are $e(x)=1/\eta$, with $\eta=b-a$ (i.e., the uniform
density), and 1, respectively. The difference between two elements $f,g\in \mathcal{B}^2(I)$, denoted by $f\ominus g$, is obtained as perturbation of $f$ with the reciprocal of $g$, i.e., $(f\ominus g)(x)=(f\oplus[(-1)\odot g])(x), x\in I$. Finally, to complete the Hilbert space structure, the
inner product is defined as
\begin{equation}\label{eq:inner-prod}
\langle f,g\rangle_\mathcal{B}=\frac{1}{2\eta}\int_I\int_I\ln\frac{f(x)}{f(y)}\ln\frac{g(x)}{g(y)}\,\mathrm{d}x\,\mathrm{d}y, \quad f,g \in \mathcal{B}^2(I).
\end{equation}
Form of the inner product clearly indicates that the relevant information in densities is contained in (log-)ratios between elements from the support $I$.

Density functions can be considered as functional counterparts to compositional data, positive vectors carrying relative information
\cite{aitchison86,pawlowsky15} that are driven by the Aitchison geometry \cite{pawlowsky01}. In order to enable their statistical processing using standard multivariate methods in real space \cite{eaton83}, the preferred strategy is to express them either in centered log-ratio (clr) coefficients \cite{aitchison86} with respect to a generating system, or in logratio coordinates, preferably with respect to an orthonormal basis \cite{egozcue03}. The latter coordinates (called also isometric log-ratio coordinates), as well as the clr coefficients, provide isometry between the Aitchison geometry and the real Euclidean space. A similar strategy is used also for densities in the Bayes space \cite{boogaart14}.
An isometric isomorphism between $\mathcal{B}^2(I)$ and $L^2(I)$ is represented by the \emph{centered log-ratio} (clr) transformation \cite{menafoglio14,boogaart14}, defined for $f\in\mathcal{B}^2(I)$ as
\begin{equation}
\label{fclr}
\mbox{clr}(f)(x) \equiv f_c(x)=\ln f(x)-\frac{1}{\eta}\int_I\ln f(y)\,\mathrm{d}y.
\end{equation}
We remark that such an isometry allows to compute operations and inner products among the
elements in $\mathcal{B}^2(I)$ in terms of their counterpart in $L^2(I)$ among the clr-transforms, i.e.
\[
\mbox{clr}(f\oplus g)(x)=f_c(x)+g_c(x),\ \mbox{clr}(\alpha\odot f)(x)=\alpha\cdot f_c(x)\]
and
\[
\langle f,g\rangle_\mathcal{B}=\langle f_c,g_c\rangle_2= \int_I f_c(x)g_c(x)\,\mathrm{d}x.\]
However, the clr transformation induces an additional constraint,
\begin{equation}
\label{clrzero}
\int_I\mbox{clr}(f)(x) \mathrm{d}x = \int_I \ln f(x)\, \mathrm{d}x - \int_I \frac{1}{\eta}\int_I\ln f(y)\,\mathrm{d}y\, \mathrm{d}x = 0,
\end{equation}
that needs to be taken into account for computation and analysis on clr-transformed density functions. As the clr space is clearly a subspace of $L^2(I)$, hereafter it is denoted as $L_0^2(I)$. The inverse clr transformation is obtained as
\begin{equation}\label{fclrinv}
    \mbox{clr}^{-1}[f_c](x)=\frac{\exp(f_c(x))}{\int_I\exp(f_c(y))\,\mbox{d}y};
\end{equation}
again as before, the denominator is used just to achieve the unit integral constraint representation of the resulting density (without loss of relative information, carried by the density function).

According to \cite{boogaart14}, it is not necessary to restrict ourselves to the constrained clr space, because a basis in $\mathcal{B}^2(I)$ can be
easily constructed. Specifically, let $\psi_0(x),\psi_1(x),\psi_2(x),\dots$ is a basis in $L^2(I)$ and assume that $\psi_0(x)$ is a constant function,
then $\varphi_1(x):=\exp(\psi_1(x)),$ $\varphi_2(x):=\exp(\psi_2(x)),\dots$ form a basis in $\mathcal{B}^2(I)$. Of course, also here an orthonormal basis
is preferable, but it is not always possible in applications. Nevertheless, if this would be so, then a function
$f\in\mathcal{B}^2(I)$ can be projected orthogonally to the space spanned, e.g., by the first $r$ functions
$\varphi_1(x),\varphi_2(x),\dots,\varphi_r(x)$. This is done through the respective coefficients $c_1,\dots,c_r$ in the basis expansion
\begin{equation}
\label{bexpansion}
f(x)=c_1\odot\varphi_1(x)\oplus c_2\odot\varphi_2(x)\oplus\ldots\oplus c_r\odot\varphi_r(x)\oplus
\ldots=\bigoplus_{i=1}^{\infty}c_i\odot\varphi_i(x),\ x\in I.
\end{equation}

Functional data analysis relies strongly on approximation of the input functions using splines \cite{ramsay05}. However, splines are mostly utilized
purely as an approximation tool, without considering further methodological consequences. Because statistical processing of density functions requires a deeper geometrical background, provided by the Bayes spaces, this should be followed also by the respective spline representation, performed
preferably in the clr space $L_0^2(I)$. In \cite{Mach}, a first attempt of constructing a spline representation that would honor the zero integral
constraint (\ref{clrzero}) was performed. The problem is that $B$-splines that form basis for the spline expansion in \cite{Mach} come from
$L^2(I)$, but not from $L_0^2(I)$. This paper presents an important step ahead -- such splines are constructed that form basis functions in the clr space $L_0^2(I)$. Consequently, the splines can be expressed also directly in $\mathcal{B}^2(I)$ and the spline representation formulated in terms of the
Bayes space which can be used for interpretation purposes; hereafter we refer to \textit{compositional splines}. Apart from methodological advantages, using compositional splines simplifies
construction and interpretation of spline coefficients that can be considered as coefficients of a (possibly orthonormal) basis in $\mathcal{B}^2(I)$.


The paper is organized as follows. In the next section the construction of splines basis in $L_0^2(I)$ is presented together with a comparison to
spline functions introduced in \cite{Mach}. Section 3 is devoted to smoothing splines in $L_0^2(I)$ and Section 4 discusses orthogonalization of basis functions
(that form, by construction, an oblique basis). Section 5 introduces a new class of splines that reflect the Bayes spaces methodology, compositional splines. Section 6 demonstrates usefulness of the new approximation tool in context of simplicial functional principal component analysis with anthropometric data and the final Section 7 concludes.


\section{Construction of spline in $L_0^2(I)$
}


Because the clr transformation enables to process density functions in the standard $L^2$ space, just restricted according to zero integral
constraint (\ref{clrzero}), it is natural that also construction of compositional splines should start in $L_0^2(I)$. Nevertheless, before doing so,
some basic facts about $B$-spline representation of splines are recalled, see 
\cite{deboor78,dierckx93,Schum07} for details. 
Let the sequence of knots $$\lambda_{0}=a<\lambda_{1}<\cdots<\lambda_{g}<b=\lambda_{g+1}$$
be given. The (normalized) $B$-spline of degree $0$ (order $1$) is defined as
$$
B_{i}^1(x) \, = \, \left\{
\begin{array}{rl}
    1 &\quad\mbox{if}\; x\in[\lambda_{i},\lambda_{i+1})\\
    0 &\quad\mbox{otherwise}
\end{array}\right.
$$
and the (normalized) $B$-spline of degree $k$, $k\in\mathbb{N}$, (order $k+1$) is defined by
$$
B_{i}^{k+1}(x) \, = \,\frac{x-\lambda_i}{\lambda_{i+k}-\lambda_{i}}B_i^k(x)+\frac{\lambda_{i+k+1}-x}{\lambda_{i+k+1}-\lambda_{i+1}}B_{i+1}^k(x).
$$
Now let the functions $Z_{i}^{k+1}(x)$ for $k\geq 0$, $k\in\mathbb{N}$, be defined
\begin{equation}\label{zb}
Z_{i}^{k+1}(x):=\frac{\mbox{d}}{\mbox{d}x} B_{i}^{k+2}(x),
\end{equation}
i.e., more precisely for $k=0$
$$
Z_{i}^1(x)=\left\{
\begin{array}{rl}
     \dfrac{1}{\lambda_{i+1}-\lambda_i} & \quad\mbox{if}\; x\in[\lambda_{i},\lambda_{i+1})\\
     & \\
     \dfrac{-1}{\lambda_{i+2}-\lambda_{i+1}} & \quad\mbox{if}\; x\in(\lambda_{i+1},\lambda_{i+2}]
\end{array}\right.
$$
and for $k\geq 1$
\begin{equation}
\label{defZ}
Z_i^{k+1}(x)=(k+1)\left(\frac{B_i^{k+1}(x)}{\lambda_{i+k+1}-\lambda_i}-\frac{B_{i+1}^{k+1}(x)}{\lambda_{i+k+2}-\lambda_{i+1}}\right).
\end{equation}

\bigskip
Noteworthy, functions $Z_i^{k+1}(x)$ have similar properties as $B$-splines $B_i^{k+1}(x)$.

\begin{enumerate}
    \item They are piecewise polynomials of degree $k$. Particularly, $Z_i^1(x)$ is a piecewise constant polynomial, $Z_i^2(x)$ is a piecewise
linear polynomial, see Figure \ref{o1}, $Z_i^3(x)$ is a piecewise quadratic polynomial, see Figure \ref{o2}. For other examples see Figures
\ref{o3}-\ref{o5}.
    \item It is evident that for $k\geq 1$ the function $Z_i^{k+1}(x)$ and its derivatives up to order $k-1$ are all continuous.
    \item It is easy to check that for $k\geq 0 $
    $$\mbox{supp}\;Z_i^{k+1}(x) \; = \; \mbox{supp}\;B_i^{k+2}(x)=[\lambda_i,\lambda_{i+k+2}],$$
    \noindent and of course
    $$Z_i^{k+1}(x) \; = \; 0 \quad \mbox{if} \quad x\notin[\lambda_i,\lambda_{i+k+2}].$$
    \item  From the perspective of $L_0^2(I)$, a crucial point is that the integral of $Z_i^{k+1}(x)$ equals to zero. If we consider Curry-Schoenberg $B$-spline
    $M_i^{k+1}(x)$ \cite{deboor78}, which are defined as
    $$M_i^{k+1}(x) \, := \, \dfrac{k+1}{\lambda_{i+k+1}-\lambda_i} \, B_i^{k+1}(x)$$
    with property
    $$\int\limits_{\mathbb{R}} M_i^{k+1}(x) \, \mbox{d}x \, = \, 1,$$
    than it is clear that
    \begin{equation}\label{ZM}
          Z_i^{k+1}(x) \, = \, M_i^{k+1}(x) - M_{i+1}^{k+1}(x)
    \end{equation}
    and
    $$\int\limits_{\mathbb{R}} Z_i^{k+1}(x) \, \mbox{d}x \, = \, 0.$$

\end{enumerate}


\begin{figure}[!ht]
  \begin{center}
    \includegraphics[width=6cm,height=4cm]{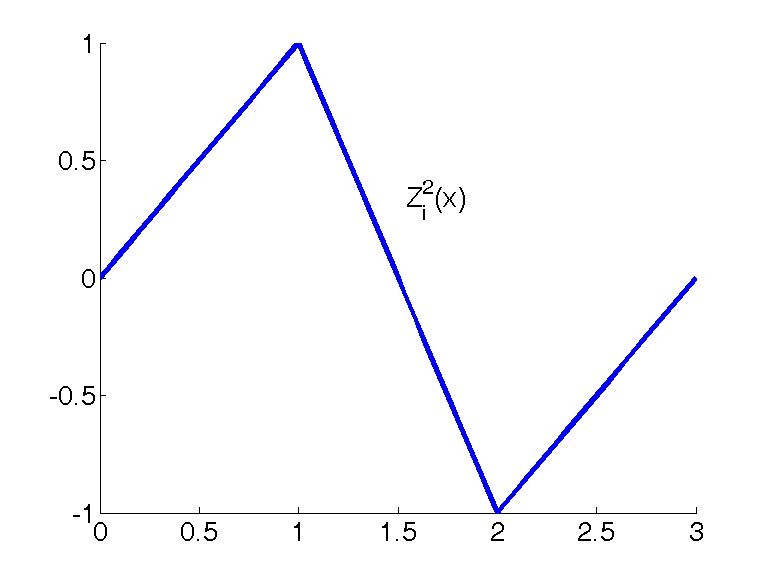}
    \includegraphics[width=6cm,height=4cm]{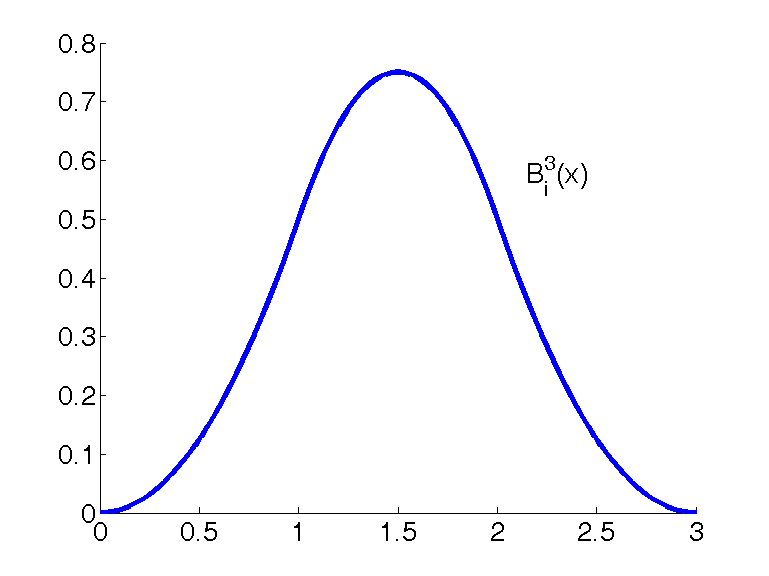}
    \caption{\label{o1} The piecewise linear function $Z_i^2(x)=\dfrac{\mbox{d}}{\mbox{d}x} B_{i}^{3}(x)$ with equidistant knots $0,1,2,3$.}
  \end{center}
\end{figure}

\begin{figure}[!ht]
  \begin{center}
    \includegraphics[width=6cm,height=4cm]{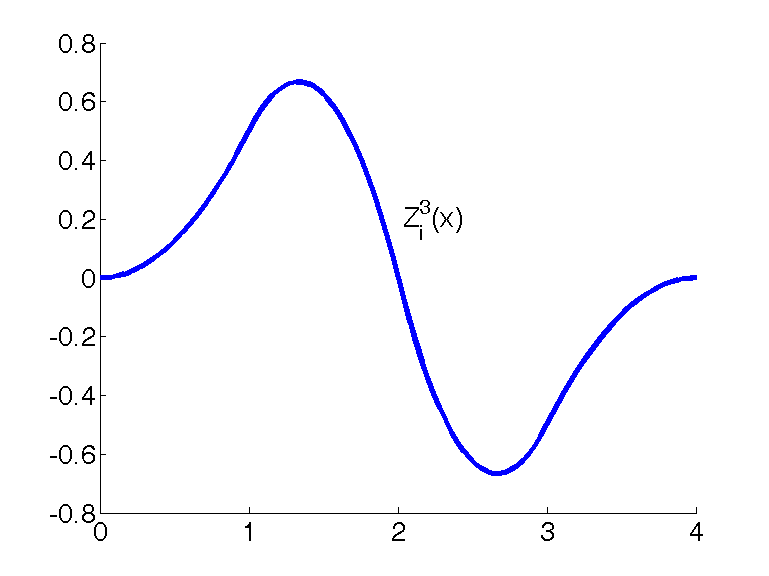}
    \includegraphics[width=6cm,height=4cm]{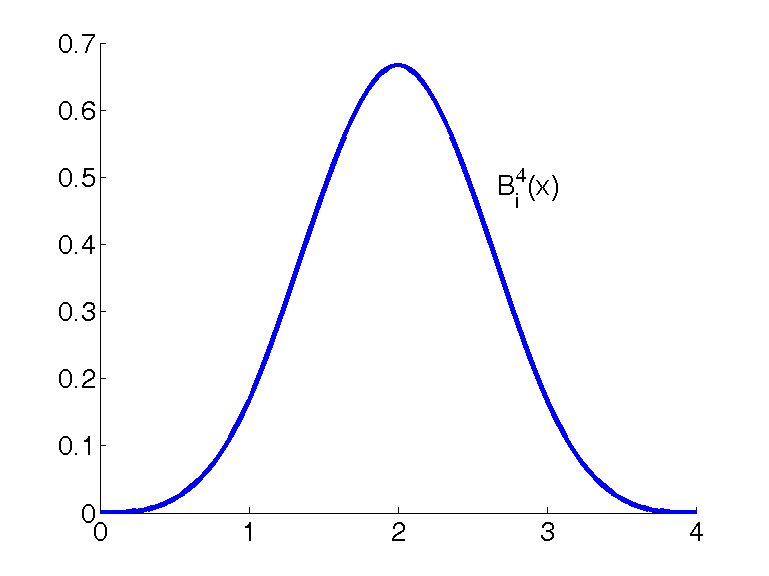}
    \caption{\label{o2} The piecewise quadratic function $Z_i^3(x)=\dfrac{\mbox{d}}{\mbox{d}x} B_{i}^{4}(x)$ with equidistant knots $0,1,2,3,4$.}
  \end{center}
\end{figure}

\begin{figure}[!ht]
  \begin{center}
     \includegraphics[width=6cm,height=4cm]{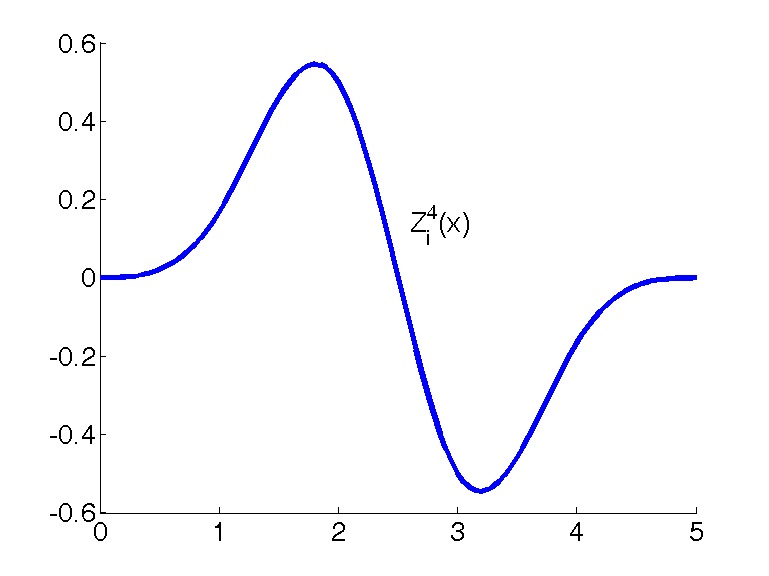}
     \includegraphics[width=6cm,height=4cm]{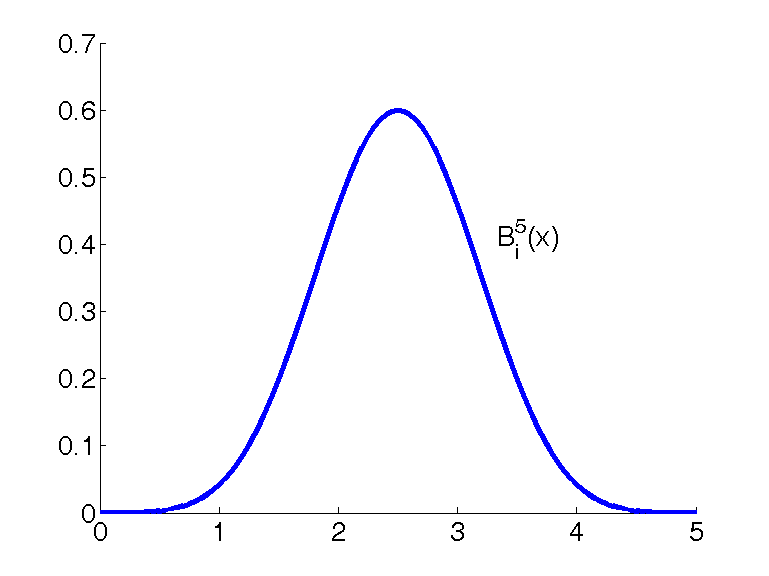}
     \caption{\label{o3} The piecewise cubic function $Z_i^4(x)=\dfrac{\mbox{d}}{\mbox{d}x} B_{i}^{5}(x)$ with equidistant knots $0,1,2,3,4,5$.}
  \end{center}
\end{figure}

\begin{figure}[!ht]
  \begin{center}
    \includegraphics[width=6cm,height=4cm]{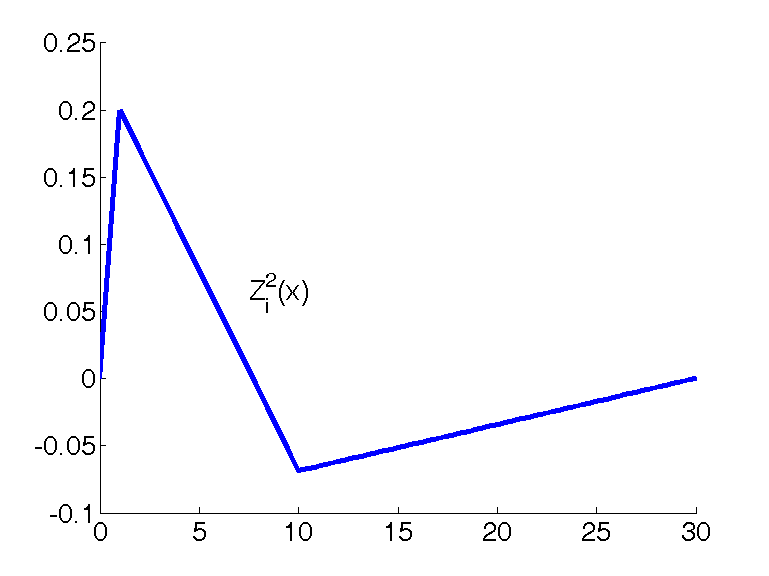}
    \includegraphics[width=6cm,height=4cm]{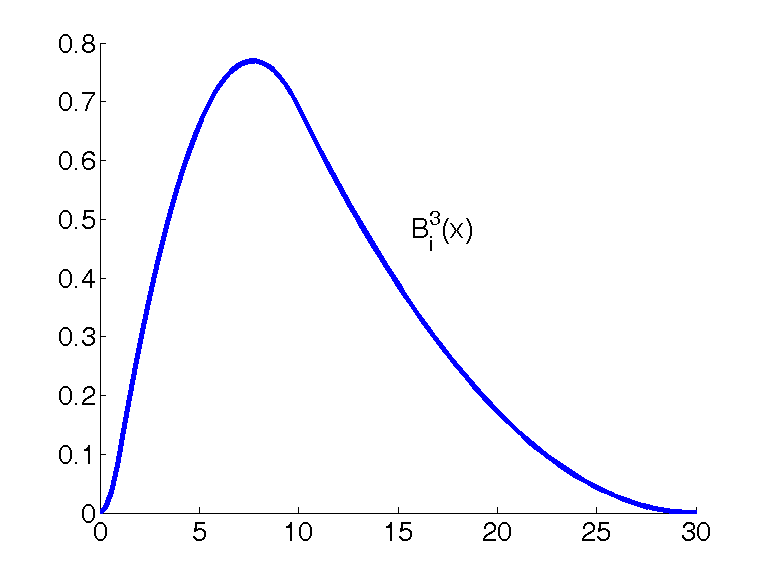}
    \caption{\label{o4} The piecewise linear function $Z_i^2(x)=\dfrac{\mbox{d}}{\mbox{d}x} B_{i}^{3}(x)$ with nonequidistant knots $0,1,10,30$.}
  \end{center}
\end{figure}

\begin{figure}[!ht]
  \begin{center}
     \includegraphics[width=6cm,height=4cm]{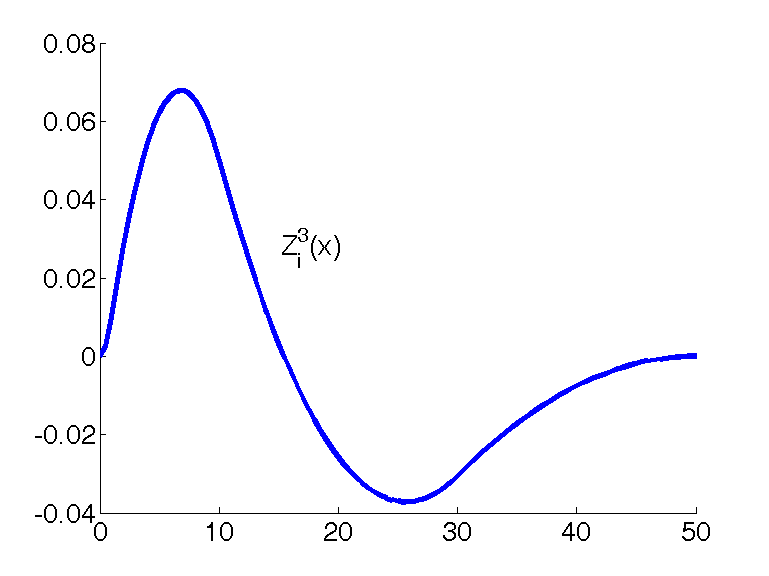}
     \includegraphics[width=6cm,height=4cm]{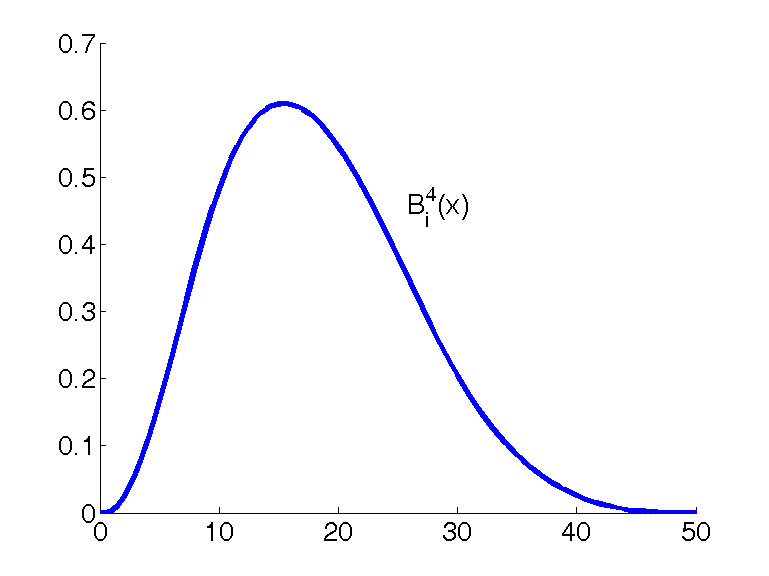}
     \caption{\label{o5} The piecewise quadratic function $Z_i^3(x)=\dfrac{\mbox{d}}{\mbox{d}x} B_{i}^{4}(x)$ with nonequidistant knots $0,1,10,30,50$.}
  \end{center}
\end{figure}

It is known that for the vector space ${\cal S}_{k}^{\Delta\lambda}[a,b]$ of polynomial splines of degree $k>0$, $k\in\mathbb{N}$,
defined on a finite interval $I=[a,b]$ with the sequence of knots $\Delta\lambda= \left\{\lambda_i\right\}_{i=0}^{g+1}$, $\lambda_{0}=a<\lambda_{1}<\ldots<\lambda_{g}<b=\lambda_{g+1}$,
the dimension is $$\dim({\cal S}_{k}^{\Delta\lambda}[a,b]) \;= \; g+k+1.$$
For the construction of all basis functions $B_i^{k+1}(x)$, it is necessary to consider some additional knots.
Without loss of generality we can add coincident knots
\begin{equation}\label{adknot}
\lambda_{-k}=\cdots=\lambda_{-1}=\lambda_{0}=a, \qquad b=\lambda_{g+1}=\lambda_{g+2}=\cdots=\lambda_{g+k+1}.
\end{equation}
Then every spline $s_{k}(x)\in{\cal S}_{k}^{\Delta\lambda}[a,b]$ in $L^2(I)$ has a unique representation
\begin{equation}\label{br}
    s_{k}\left(x\right)=\sum\limits_{i=-k}^{g}b_{i}B_{i}^{k+1}\left(x\right).
\end{equation}
In \cite{Mach, Tal}, the splines with zero integral are studied. There is given the necessary and sufficient condition for $B$-splines
coefficients of these splines. However, typical $B$-splines $B_i^{k+1}(x)$, thus ignoring the constraint (\ref{clrzero}) in $L_0^2(I)$ for construction of the $B$-spline basis, were used there.

\medskip
Now, regarding the definition (\ref{zb}), we are able to use spline functions $Z_i^{k+1}(x)$ which have zero integral 
on $I$ (denoted also as $ZB$-splines in the sequel).
In the following, ${\cal Z}_{k}^{\Delta\lambda}[a,b]$ denotes the vector space of polynomial splines of degree $k>0$, defined on a finite interval
$[a,b]$ with the sequence of knots $\Delta\lambda$ and having zero integral on $[a,b]$, it means
\begin{equation}\label{Zspace}
{\cal Z}_{k}^{\Delta\lambda}[a,b] \, := \, \{ s_k(x) \in {\cal S}_{k}^{\Delta\lambda}[a,b]\, : \, \int\limits_I s_k(x) \, \mbox{d}x = 0 \}.
\end{equation}

\begin{theorem}
  The dimension of the vector space ${\cal Z}_{k}^{\Delta\lambda}[a,b]$ defined by the formula (\ref{Zspace}) is 
  $g+k$.
\end{theorem}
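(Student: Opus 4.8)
The plan is to realize $\mathcal{Z}_k^{\Delta\lambda}[a,b]$ as the kernel of a single non-trivial linear functional on $\mathcal{S}_k^{\Delta\lambda}[a,b]$ and then invoke the rank--nullity theorem. Define $L:\mathcal{S}_k^{\Delta\lambda}[a,b]\to\mathbb{R}$ by $L(s_k)=\int_I s_k(x)\,\mathrm{d}x$; this is plainly linear, and by the definition (\ref{Zspace}) one has $\mathcal{Z}_k^{\Delta\lambda}[a,b]=\ker L$. The first step is to check that $L\neq 0$: the constant function $s_k\equiv 1$ lies in $\mathcal{S}_k^{\Delta\lambda}[a,b]$ and $L(1)=b-a=\eta\neq 0$ (equivalently, any single Curry--Schoenberg $B$-spline $M_i^{k+1}$ belongs to $\mathcal{S}_k^{\Delta\lambda}[a,b]$, is supported inside $I$ thanks to the coincident boundary knots (\ref{adknot}), and satisfies $L(M_i^{k+1})=1$). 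Hence $L$ is surjective onto $\mathbb{R}$, so $\dim\ker L=\dim\mathcal{S}_k^{\Delta\lambda}[a,b]-1=(g+k+1)-1=g+k$, which is the assertion.

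Alternatively --- and this route is likely more useful for the sequel, since it yields an explicit spanning set --- one can directly exhibit $g+k$ linearly independent elements of $\mathcal{Z}_k^{\Delta\lambda}[a,b]$, namely the $ZB$-splines $Z_{-k}^{k+1},Z_{-k+1}^{k+1},\dots,Z_{g-1}^{k+1}$. By the properties recalled above (items~1, 2 and~4), each $Z_i^{k+1}$ is a spline of degree $k$ on $\Delta\lambda$ with zero integral on $I$, so all of them lie in $\mathcal{Z}_k^{\Delta\lambda}[a,b]$, and there are exactly $(g-1)-(-k)+1=g+k$ of them. Their linear independence follows from the identity (\ref{ZM}), $Z_i^{k+1}=M_i^{k+1}-M_{i+1}^{k+1}$: a vanishing combination $\sum_{i=-k}^{g-1}c_i Z_i^{k+1}=0$ rearranges, by telescoping the $M$-splines, to $c_{-k}M_{-k}^{k+1}+\sum_{i=-k+1}^{g-1}(c_i-c_{i-1})M_i^{k+1}-c_{g-1}M_g^{k+1}=0$, and since the Curry--Schoenberg splines $M_{-k}^{k+1},\dots,M_g^{k+1}$ form a basis of $\mathcal{S}_k^{\Delta\lambda}[a,b]$ this forces $c_{-k}=0$, $c_i=c_{i-1}$ for every admissible $i$, and $c_{g-1}=0$, hence $c_i=0$ throughout. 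This gives $\dim\mathcal{Z}_k^{\Delta\lambda}[a,b]\ge g+k$; together with the upper bound $\dim\mathcal{Z}_k^{\Delta\lambda}[a,b]\le\dim\mathcal{S}_k^{\Delta\lambda}[a,b]-1=g+k$ from the first paragraph (the ambient space strictly contains $\mathcal{Z}_k^{\Delta\lambda}[a,b]$ because $L\neq 0$), equality follows, and as a by-product $\{Z_i^{k+1}\}_{i=-k}^{g-1}$ is shown to be a basis.

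The argument is short, and the only points requiring genuine attention are bookkeeping ones: getting the index range of the $ZB$-splines right (one fewer than the number of $B$-splines $B_i^{k+1}$, since each $Z_i^{k+1}$ consumes the pair $M_i^{k+1},M_{i+1}^{k+1}$), and confirming that with the coincident knots (\ref{adknot}) all the $B$-splines and $M$-splines in play are genuinely supported in $[a,b]$, so that integration over $\mathbb{R}$ and over $I$ agree. The one step that might otherwise look delicate --- checking $\int_I Z_i^{k+1}(x)\,\mathrm{d}x=0$ for the boundary-adjacent indices, where supports abut the endpoints --- has already been settled in item~4 via (\ref{ZM}), so no extra work is needed there.
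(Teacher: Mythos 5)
Your first argument is correct and is essentially the paper's proof: both realize $\mathcal{Z}_{k}^{\Delta\lambda}[a,b]$ as the codimension-one kernel of the integral functional on $\mathcal{S}_{k}^{\Delta\lambda}[a,b]$, the only difference being that the paper verifies nontriviality of the constraint via the explicit $B$-spline formula $\int_I s_k(x)\,\mathrm{d}x=\frac{1}{k+1}\sum_{i=-k}^{g}b_i(\lambda_{i+k+1}-\lambda_i)$, whereas you evaluate the functional at the constant function (or at a single Curry--Schoenberg spline). Your alternative telescoping argument is also sound, but it essentially reproduces the paper's proof of the subsequent theorem that $Z_{-k}^{k+1},\ldots,Z_{g-1}^{k+1}$ form a basis, so it adds the basis statement as a by-product rather than a genuinely different route to the dimension count.
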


\begin{proof}
  For spline $s_k(x)\in{\cal S}_{k}^{\Delta\lambda}[a,b]$, $s_{k}\left(x\right)=\sum\limits_{i=-k}^{g}b_{i}B_{i}^{k+1}\left(x\right)$,
  with the coincident additional knots it is known, \cite{dierckx93}, that
  $$
  \int\limits_I s_k(x) \, \mbox{d}x \, = \, \dfrac{1}{k+1} \, \sum\limits_{i=-k}^{g} \, b_{i}(\lambda_{i+k+1}-\lambda_i).
  $$
  It means that $B$-spline coefficients of $s_k(x)\in{\cal Z}_{k}^{\Delta\lambda}[a,b] \subset {\cal S}_{k}^{\Delta\lambda}[a,b]$ satisfy condition
  $0=\sum\limits_{i=-k}^{g} \, b_{i}(\lambda_{i+k+1}-\lambda_i) \, = \, \mathbf{A}\mathbf{b}$ with
  $\mathbf{A}=(\lambda_1-\lambda_{-k},\cdots,\lambda_{g+k+1}-\lambda_{g})$, $\mathbf{b}=(b_{-k},\cdots,b_g)^{\top}$.
  And it is obvious that   $\mbox{codim}({\cal Z}_{k}^{\Delta\lambda}[a,b])=1$, thus
  $$\dim({\cal Z}_{k}^{\Delta\lambda}[a,b]) \, = \, \dim({\cal S}_{k}^{\Delta\lambda}[a,b])- \mbox{codim}({\cal Z}_{k}^{\Delta\lambda}[a,b]) \, = \, g+k.$$
 \end{proof}

\begin{theorem}
  For the coincident additional knots (\ref{adknot}), the 
	functions $Z_{-k}^{k+1}(x),$ $\cdots,Z_{g-1}^{k+1}(x)$ form a basis for the space ${\cal Z}_{k}^{\Delta\lambda}[a,b]$.
\end{theorem}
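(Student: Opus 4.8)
The plan is to show that the $g+k$ functions $Z_{-k}^{k+1},\dots,Z_{g-1}^{k+1}$ are linearly independent members of $\mathcal{Z}_k^{\Delta\lambda}[a,b]$, and then to conclude via the dimension formula $\dim(\mathcal{Z}_k^{\Delta\lambda}[a,b])=g+k$ from the previous theorem, since a linearly independent family whose cardinality equals the dimension is automatically a basis. Membership in the space is quick: by (\ref{zb}) each $Z_i^{k+1}=\frac{\mathrm{d}}{\mathrm{d}x}B_i^{k+2}$ is a piecewise polynomial of degree $k$ whose derivatives up to order $k-1$ are continuous, hence $Z_i^{k+1}\in\mathcal{S}_k^{\Delta\lambda}[a,b]$; and for the indices $i=-k,\dots,g-1$ the support identity recalled above gives $\mbox{supp}\,Z_i^{k+1}=[\lambda_i,\lambda_{i+k+2}]\subseteq[\lambda_{-k},\lambda_{g+k+1}]=[a,b]=I$, so $\int_I Z_i^{k+1}(x)\,\mathrm{d}x=\int_{\mathbb{R}}Z_i^{k+1}(x)\,\mathrm{d}x=0$. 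Thus every such $Z_i^{k+1}$ lies in $\mathcal{Z}_k^{\Delta\lambda}[a,b]$, and it remains only to establish linear independence.

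For linear independence I would use the representation (\ref{ZM}), $Z_i^{k+1}=M_i^{k+1}-M_{i+1}^{k+1}$ with $M_i^{k+1}=\frac{k+1}{\lambda_{i+k+1}-\lambda_i}B_i^{k+1}$; this is legitimate for all $i=-k,\dots,g-1$ because the denominators $\lambda_{i+k+1}-\lambda_i$ appearing in $M_{-k}^{k+1},\dots,M_g^{k+1}$ are strictly positive (in particular $\lambda_1-\lambda_{-k}=\lambda_1-a>0$ and $\lambda_{g+k+1}-\lambda_g=b-\lambda_g>0$ cover the two extreme indices). Suppose $\sum_{i=-k}^{g-1}c_i Z_i^{k+1}\equiv 0$. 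Substituting the representation and regrouping produces the telescoped identity
\[
c_{-k}M_{-k}^{k+1}+\sum_{i=-k+1}^{g-1}(c_i-c_{i-1})M_i^{k+1}-c_{g-1}M_g^{k+1}\equiv 0 .
\]
Because each $M_i^{k+1}$ is a nonzero scalar multiple of $B_i^{k+1}$ and the $B$-splines $B_{-k}^{k+1},\dots,B_g^{k+1}$ form a basis of $\mathcal{S}_k^{\Delta\lambda}[a,b]$ (the representation (\ref{br}) being unique), the functions $M_{-k}^{k+1},\dots,M_g^{k+1}$ are linearly independent; hence every coefficient above vanishes. This yields $c_{-k}=0$, $c_i=c_{i-1}$ for $i=-k+1,\dots,g-1$, and $c_{g-1}=0$, and the first two relations already force $c_i=0$ for all $i$.

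Combining the two steps, $\{Z_{-k}^{k+1},\dots,Z_{g-1}^{k+1}\}$ is a linearly independent subset of $\mathcal{Z}_k^{\Delta\lambda}[a,b]$ of cardinality $g+k=\dim\mathcal{Z}_k^{\Delta\lambda}[a,b]$, hence a basis. I expect the only genuinely delicate point to be the bookkeeping at the two boundary indices $i=-k$ and $i=g-1$: one must confirm that the coincident end knots (\ref{adknot}) do not create a degenerate denominator in (\ref{ZM}) and that $\mbox{supp}\,Z_{-k}^{k+1}$ and $\mbox{supp}\,Z_{g-1}^{k+1}$ still sit inside $I$, so that those two functions are bona fide elements of the space. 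Everything else is the routine telescoping computation together with the dimension count; alternatively one could instead prove directly that the $Z_i^{k+1}$ span $\mathcal{Z}_k^{\Delta\lambda}[a,b]$ by solving the single constraint $\mathbf{A}\mathbf{b}=0$ on the $B$-spline coefficients, but the dimension argument is shorter.
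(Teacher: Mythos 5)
Your proposal is correct and follows essentially the same route as the paper: membership via the zero-integral property, linear independence from the identity $Z_i^{k+1}=M_i^{k+1}-M_{i+1}^{k+1}$ and the fact that the Curry--Schoenberg splines $M_i^{k+1}$ form a basis of ${\cal S}_{k}^{\Delta\lambda}[a,b]$, and then the dimension count $\dim({\cal Z}_{k}^{\Delta\lambda}[a,b])=g+k$ from the preceding theorem. The only difference is that you spell out the telescoping computation and the boundary-knot bookkeeping that the paper leaves implicit.
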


\begin{proof}
  Since $M_i^{k+1}(x)$ form a basis for the spline space ${\cal S}_{k}^{\Delta\lambda}[a,b]$ and $Z_i^{k+1}(x) \, = \, M_i^{k+1}(x) - M_{i+1}^{k+1}(x)$,
  the functions $Z_i^{k+1}(x)$, $i=-k,\ldots,g-1$, are linearly independent and lie in ${\cal Z}_{k}^{\Delta\lambda}[a,b]$ with $\dim({\cal Z}_{k}^{\Delta\lambda}[a,b])=g+k$.
  Therefore $Z_i^{k+1}(x)$, $i=-k,\ldots,g-1$, form a basis for the ${\cal Z}_{k}^{\Delta\lambda}[a,b]$.
\end{proof}

With regard to this theorem,
every spline $s_{k}(x)\in{\cal Z}_{k}^{\Delta\lambda}[a,b]$ has
a unique representation
\begin{equation}\label{zr}
    s_{k}\left(x\right)=\sum\limits_{i=-k}^{g-1}z_{i}Z_{i}^{k+1}\left(x\right).
\end{equation}


Now we can proceed to matrix notation of $s_{k}(x)\in{\cal Z}_{k}^{\Delta\lambda}[a,b]$.
With respect to (\ref{defZ}) and (\ref{ZM}), we are able to write the functions $Z_{i}^{k+1}\left(x\right)$ in matrix notation as
$$
Z_{i}^{k+1}\left(x\right)=(k+1)\left(B_{i}^{k+1}\left(x\right),B_{i+1}^{k+1}\left(x\right)\right)
\left(\begin{array}{cc}
        \dfrac{1}{\lambda_{i+k+1}-\lambda_{i}} & 0 \\
        0 & \dfrac{1}{\lambda_{i+k+2}-\lambda_{i+1}}
      \end{array}
\right)
\left(\begin{array}{r}
        1 \\
        -1
      \end{array}
\right).
$$
Then it is clear that
$$
(Z_{-k}^{k+1}\left(x\right),\ldots,Z_{g-1}^{k+1}\left(x\right))=(B_{-k}^{k+1}\left(x\right),\ldots,B_{g}^{k+1}\left(x\right))\mathbf{D}\mathbf{K}=
\mathbf{B}_{k+1}(x)\mathbf{D}\mathbf{K},
$$
where
\begin{equation}\label{mD}
  \mathbf{D}=(k+1)\mbox{diag}\left(\dfrac{1}{\lambda_{1}-\lambda_{-k}},\ldots,\dfrac{1}{\lambda_{g+k+1}-\lambda_{g}}\right)=
(k+1)\mbox{diag}\left(\dfrac{1}{l_1},\ldots,\dfrac{1}{l_{g+k+1}}\right)
\end{equation}
and
\begin{equation}\label{mK}
  \mathbf{K}=\left(\begin{array}{rrrrrr}
	               1 &  0 & 0 & \cdots & 0 & 0 \\
                  -1 &  1 & 0 & \cdots & 0 & 0 \\
                   0 & -1 & 1 & \cdots & 0 & 0 \\
                   \vdots &  \vdots & \ddots & \ddots & \vdots & \vdots \\
                   0 &  0 & 0 &\cdots & -1 & 1 \\
                   0 &  0 & 0 & \cdots & 0 & -1
                 \end{array}\right)\in\mathbb{R}^{g+k+1,g+k}.
\end{equation}
Therefore the spline $s_{k}(x)\in{\cal Z}_{k}^{\Delta\lambda}[a,b]$, $s_{k}\left(x\right)=\sum\limits_{i=-k}^{g-1}b_{i}Z_{i}^{k+1}\left(x\right)$
can be written in matrix notation as
\begin{equation}\label{mn}
  s_{k}(x) \; = \; \mathbf{Z}_{k+1}(x)\mathbf{z} \; = \; \mathbf{B}_{k+1}(x)\mathbf{D}\mathbf{K}\mathbf{z},
\end{equation}
where
$\mathbf{Z}_{k+1}(x)=(Z_{-k}^{k+1}\left(x\right),\ldots,Z_{g-1}^{k+1}\left(x\right))$
and
$\mathbf{z}=\left(z_{-k},\ldots,z_{g-1}\right)^{\top}$.

\begin{remark}
The formula (\ref{mn}) is very useful, because we can use the standard $B$-spline 
basis for working with splines honoring the zero integral constraint, which is very convenient from a computational point of view.
\end{remark}


\begin{example}
We consider knots $\Delta\lambda=\left\{\lambda_i\right\}_{i=0}^{g+1}$, $\lambda_{0}=0=a<2<5<9<14<b=20=\lambda_{5}$.
The task is to find a cubic spline with the given sequence of knots and which has zero integral on the interval $[0,20]$.
It is evident that $k=3$, $g=4$. We consider the additional knots
$$\lambda_{-3}=\lambda_{-2}=\lambda_{-1}=\lambda_{0}=a=0, \qquad 20=b=\lambda_{5}=\lambda_{6}=\lambda_{7}=\lambda_{8}.$$
The basis functions of the  space ${\cal Z}_{3}^{\Delta\lambda}[0,20]$ are plotted in Figure \ref{o6}.
Every spline $s_{3}(x)\in{\cal Z}_{3}^{\Delta\lambda}[0,20]$ can be written as
\begin{equation}\label{zc}
    s_{3}\left(x\right)=\sum\limits_{i=-3}^{3}z_{i}Z_{i}^{4}\left(x\right).
\end{equation}
Thus, e.g., for
$\mathbf{z}=(z_{-3},\ldots,z_3)^{\top}=(0.5,-1,2,3,-8,9,1)^{\top}$
the cubic spline $s_3(x)$ with zero integral is plotted in Figure \ref{o7}.
\end{example}

\begin{figure}[!ht]
  \begin{center}
    \includegraphics[width=8.5cm,height=6cm]{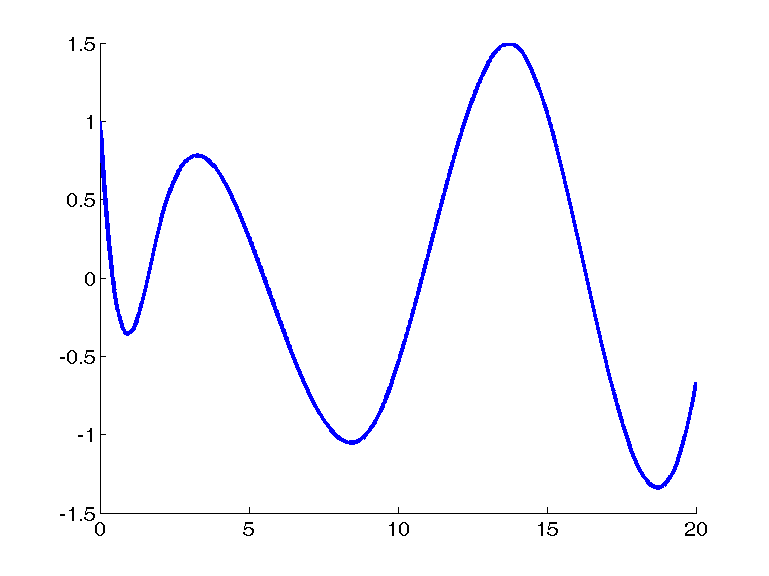}
     \caption{\label{o7} Cubic spline $s_3(x)$ with given coefficients $\mathbf{z}=(0.5,-1,2,3,-8,9,1)^{\top}$.}
  \end{center}
\end{figure}

\begin{figure}[!ht]
  \begin{center}
    \includegraphics[width=8.5cm,height=6cm]{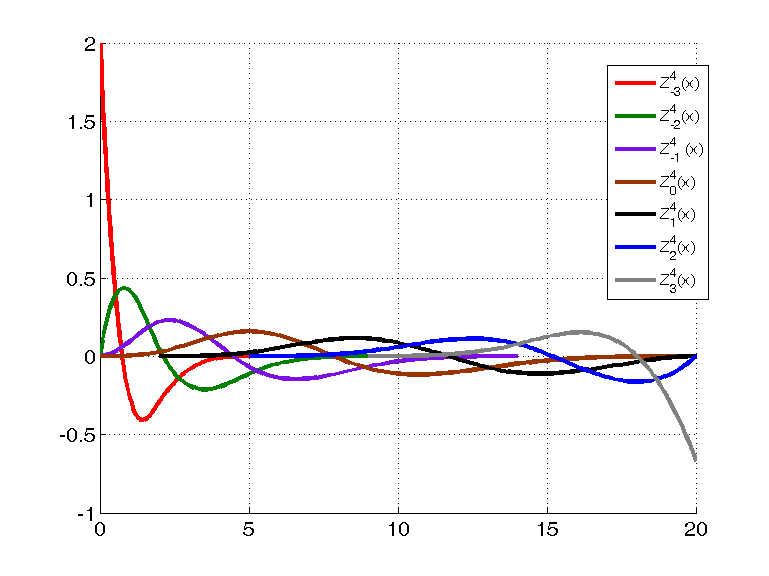}
    \caption{\label{o6} Basis splines for the space ${\cal Z}_{3}^{\Delta\lambda}[0,20]$.}
  \end{center}
\end{figure}


\section{Smoothing spline in $L_0^2(I)$}
\label{smoothing}


In \cite{Mach}, the construction of smoothing splines in the space $L_0^2(I)$ was studied, however using standard $B$-spline basis functions $B_i^{k+1}(x)$. Now we are able to construct smoothing splines in this space with new basis functions $Z_i^{k+1}(x)$. For this purpose, let
data $(x_{i},y_{i})$, $a\leq x_{i}\leq b$, weights $w_{i}>0$, $i=1,\ldots,n$,
sequence of knots $\Delta\lambda=\left\{\lambda_i\right\}_{i=0}^{g+1}$, $\lambda_{0}=a<\lambda_{1}<\ldots<\lambda_{g}<b=\lambda_{g+1}$, $n\geq g+1$ 
and a parameter $\alpha\in(0,1)$ be given. For arbitrary $l\in\left\{1,\ldots,k-1\right\}$ our task is to find a spline 
$s_{k}(x)\in{\cal Z}_{k}^{\Delta\lambda}[a,b]\subset L_0^2(I)$, which minimizes the functional
\begin{equation*}
    J_{l}(s_k) \, = \, (1-\alpha)\int_{a}^{b}\left[s_{k}^{(l)}(x)\right]^{2}\, \mbox{d}x +  \alpha
    \sum\limits_{i=1}^{n} w_{i}\left[y_{i}-s_{k}(x_{i})\right]^{2}.
\end{equation*}
Note that the choice of parameter $\alpha$ and $l$, where $l$ stands for $l$th derivation, affects smoothness of the resulting spline.
Let us denote $\mathbf{x}=\left(x_{1},\ldots,x_{n}\right)^{\top}$, $\mathbf{y}=\left(y_{1},\ldots,y_{n}\right)^{\top}$,
$\mathbf{w}=\left(w_{1},\ldots,w_{n}\right)^{\top}$ and $\mathbf{W}=diag\left(\mathbf{w}\right)$.
Regarding the representation (\ref{zr}) and matrix notation (\ref{mn}), the functional $J_l(s_k)$ can be written as a quadratic function
\begin{equation}\label{jl}
\begin{split}
  J_l(\mathbf{z}) \ = & \, (1-\alpha){\bf z}^{\top}{\bf K}^{\top}{\bf D}{\bf S}_l^{\top}{\bf M}_{kl}{\bf S}_l {\bf DKz}\, + \\
	& + \alpha \left[{\bf y}-{\bf B}_{k+1}({\bf x}){\bf DKz}\right]^{\top} {\bf W} \left[{\bf y}-{\bf B}_{k+1}({\bf x}){\bf DKz}\right],
\end{split}
\end{equation}
see \cite{Mach2,Mach1, Mach} for details. In fact the matrix
\begin{equation}\label{mkl}
  {\bf M}_{kl}=\left( m_{ij}^{kl}\right)_{i,j=-k+l}^{g}, \qquad\mbox{with}\qquad m_{ij}^{kl}=\int\limits_{a}^{b}B_{i}^{k+1-l}(x)B_{j}^{k+1-l}(x)\,\mbox{d}x
\end{equation}
is positive definite, because $B_{i}^{k+1-l}(x)\geq 0$, $i=-k+l,\ldots,g$ are basis functions.
Upper triangular matrix ${\bf S}_{l}={\bf D}_{l}{\bf L}_{l}\ldots{\bf D}_{1}{\bf L}_{1}\in\mathbb{R}^{g+k+1-l,g+k+1}$ has full row rank.
${\bf D}_{j}\in\mathbb{R}^{g+k+1-j,g+k+1-j}$ is a diagonal matrix such that
$$
{\bf D}_{j}=\left(k+1-j\right)diag\left(d_{-k+j},\ldots,d_{g}\right)
$$
with
$$
d_{i}=\dfrac{1}{\lambda_{i+k+1-j}-\lambda_{i}}, \quad i=-k+j,\ldots,g,
$$
and
$${\bf L}_{j}:=\left(\begin{array}{cccc}
                           -1 & 1      &        &  \\
                              & \ddots & \ddots &  \\
                              &        & -1     & 1
                         \end{array} \right)\in\mathbb{R}^{g+k+1-j,g+k+2-j}.
$$
Finally, ${\bf B}_{k+1}({\bf x})\in\mathbb{R}^{n,g+k+1}$ stands for the collocation matrix, i.e.
$$
{\bf B}_{k+1}({\bf x}) \, = \, \left(B_i^{k+1}(x_j)\right)_{j=1,\, i=-k}^{n,\, g}.
$$
Using the notation $\mathbf{U}:= {\bf D}{\bf K}$,
\begin{equation}\label{vg}
  {\bf G} \, := \, {\bf U}^{\top}\left[(1-\alpha){{\bf S}_l}^{\top}{\bf M}_{kl}{\bf S}_l+\alpha{\bf B}_{k+1}^{\top}({\bf x}){\bf W}{\bf B}_{k+1}({\bf x})\right]{\bf U}
\end{equation}
and
$$
{\bf g} \, := \, \alpha {\bf K}^{\top}{\bf D}{\bf B}_{k+1}^{\top}({\bf x}){\bf Wy},
$$
it is possible to rewrite the quadratic function $J_l(\bf z)$ as
\begin{equation}\label{jlz}
  J_l(\bf z)\, = \, {\bf z}^{\top}{\bf Gz}-2{\bf z}^{\top}{\bf g}+\alpha{\bf y}^{\top}{\bf Wy}.
\end{equation}
Our task is to find a spline $s_{k}(x)\in{\cal Z}_{k}^{\Delta\lambda}[a,b]$ which minimizes the functional $J_{l}(s_{k})$, in other words, we want to
find a minimum of the function (\ref{jlz}). It is obvious that this function has just one minimum if and only if the matrix
${\bf G}$ is positive definite (p.d.). From (\ref{vg}) it can be easily seen
that
$$
{\bf G} \quad \mbox{ is p.d.} \quad \Leftrightarrow \quad {\bf B}_{k+1}(\bf x) \quad \mbox{is of full column rank}.
$$
From Schoenberg-Whitney theorem and its generalization, see \cite{deboor78} and \cite{Mach2}, it is known that
matrix ${\bf B}_{k+1}(\bf x)$ is of full column rank if and only if there exists $\{u_{-k},\ldots, u_{g}\}\subset\{x_1,\ldots,x_n\}$ with
$u_i<u_{i+1}$, $i=-k,\ldots,g-1$, such that $\lambda_i<u_i<\lambda_{i+k+1}$, $i=-k,\ldots,g$.
In this case from the necessary and sufficient condition for a unique minimum of quadratic function, i.e.
$$
\frac{\partial J_{l}({\bf z})}{\partial{\bf z}^{\top}}=0,
$$
we get a system of linear equations ${\bf Gz}={\bf g}$ and then the unique solution of this system is given by
\begin{equation}\label{z*}
    {\bf z}^{*}={\bf G}^{-1}{\bf g}.
\end{equation}
Consequently, the resulting smoothing spline is obtained by the formula
\begin{equation*}\label{sz*}
    s_{k}^{*}(x) \, = \, \sum\limits_{i=-k}^{g-1}z_{i}^{*}Z_{i}^{k+1}(x),
\end{equation*}
in matrix notation using standard $B$-splines $B_i^{k+1}(x)$ as
\begin{equation*}\label{szm*}
  s_{k}^{*}(x) \, = \, \mathbf{B}_{k+1}(t)\mathbf{DK}\mathbf{z}^*,
\end{equation*}
where the vector $\mathbf{z}^{*}=\left(z_{-k}^{*},\ldots,z_{g-1}^{*}\right)^{'}$ is obtained as

.



\section{Orthogonalization of basis functions}
\label{orthogonalization}


A further step is to orthogonalize the basis
$$\mathbf{Z}_{k+1}(x)=(Z_{-k}^{k+1}\left(x\right),\ldots,Z_{g-1}^{k+1}\left(x\right))^{\top}$$
of the space
${\cal Z}_{k}^{\Delta\lambda}[a,b]$ that is by construction obligue with respect to the $L^2$ space metric. For this purpose the idea presented in \cite{redd12} is used.
We search for a linear transformation $\boldsymbol\Phi$ such that
$$
\mathbf{O}_{k+1}(x)=\boldsymbol\Phi\mathbf{Z}_{k+1}(x)
$$
forms an orthogonal set of basis functions of the space ${\cal Z}_{k}^{\Delta\lambda}[a,b]$, i.e.
$$
\int\limits_{a}^{b}\mathbf{O}_{k+1}(x)\mathbf{O}^{\top}_{k+1}(x)\,\mbox{d}x \; = \; \mathbf{I}.
$$

Regarding the lemma presented in \cite{redd12} and notation used here, we can formulate the following statement.

\begin{lemma}
An invertible transformation $\boldsymbol\Phi$ orthogonalizes the basis functions $\mathbf{Z}_{k+1}(x)$ if and only if it satisfies the condition
that
$$
\boldsymbol\Phi^{\top}\boldsymbol\Phi \; = \; \boldsymbol\Sigma^{-1},
$$
where $\boldsymbol\Sigma$ represents the positive definite matrix
$$
\boldsymbol\Sigma \; = \;
\int\limits_{a}^{b}\mathbf{Z}_{k+1}(x)\mathbf{Z}^{\top}_{k+1}(x)\,\mathrm{d}x \; =
\left( \int\limits_{a}^{b}Z^{k+1}_{i}(x)Z^{k+1}_{j}(x)\,\mathrm{d}x \right)_{i,j=-k}^{g-1}.
$$
\end{lemma}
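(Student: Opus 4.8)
The plan is to establish the equivalence by a direct computation of the Gram matrix of the transformed functions $\mathbf{O}_{k+1}(x) = \boldsymbol\Phi\mathbf{Z}_{k+1}(x)$ and reading off when it equals the identity. First I would write out
\[
\int_a^b \mathbf{O}_{k+1}(x)\mathbf{O}^\top_{k+1}(x)\,\mathrm{d}x
= \int_a^b \boldsymbol\Phi\,\mathbf{Z}_{k+1}(x)\mathbf{Z}^\top_{k+1}(x)\,\boldsymbol\Phi^\top\,\mathrm{d}x
= \boldsymbol\Phi\left(\int_a^b \mathbf{Z}_{k+1}(x)\mathbf{Z}^\top_{k+1}(x)\,\mathrm{d}x\right)\boldsymbol\Phi^\top
= \boldsymbol\Phi\,\boldsymbol\Sigma\,\boldsymbol\Phi^\top,
\]
where pulling the constant matrices $\boldsymbol\Phi$ and $\boldsymbol\Phi^\top$ outside the integral is justified entry-wise, and $\boldsymbol\Sigma$ is exactly the matrix named in the statement.

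The orthogonality requirement $\int_a^b \mathbf{O}_{k+1}(x)\mathbf{O}^\top_{k+1}(x)\,\mathrm{d}x = \mathbf{I}$ is therefore equivalent to $\boldsymbol\Phi\,\boldsymbol\Sigma\,\boldsymbol\Phi^\top = \mathbf{I}$. Since $\boldsymbol\Phi$ is assumed invertible, I would multiply this identity on the left by $\boldsymbol\Phi^{-1}$ and on the right by $(\boldsymbol\Phi^\top)^{-1} = (\boldsymbol\Phi^{-1})^\top$ to obtain $\boldsymbol\Sigma = \boldsymbol\Phi^{-1}(\boldsymbol\Phi^{-1})^\top$, and then invert both sides (both are invertible) to get $\boldsymbol\Sigma^{-1} = \boldsymbol\Phi^\top\boldsymbol\Phi$, which is the claimed condition. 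The converse direction just reverses these algebraic manipulations: from $\boldsymbol\Phi^\top\boldsymbol\Phi = \boldsymbol\Sigma^{-1}$ one gets $\boldsymbol\Sigma = (\boldsymbol\Phi^\top\boldsymbol\Phi)^{-1} = \boldsymbol\Phi^{-1}(\boldsymbol\Phi^{-1})^\top$, hence $\boldsymbol\Phi\boldsymbol\Sigma\boldsymbol\Phi^\top = \mathbf{I}$.

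It remains to justify the two side remarks of the statement: that $\boldsymbol\Sigma$ is positive definite, and implicitly that such a $\boldsymbol\Phi$ exists. Positive definiteness of $\boldsymbol\Sigma$ follows because for any nonzero vector $\mathbf{c} = (c_{-k},\dots,c_{g-1})^\top$ we have $\mathbf{c}^\top\boldsymbol\Sigma\,\mathbf{c} = \int_a^b \big(\sum_i c_i Z_i^{k+1}(x)\big)^2\,\mathrm{d}x \geq 0$, with equality only if $\sum_i c_i Z_i^{k+1}(x) \equiv 0$ on $[a,b]$; but by the previous theorem the $Z_i^{k+1}(x)$ form a basis of ${\cal Z}_k^{\Delta\lambda}[a,b]$, hence are linearly independent, forcing $\mathbf{c} = \mathbf{0}$. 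Existence of an invertible $\boldsymbol\Phi$ with $\boldsymbol\Phi^\top\boldsymbol\Phi = \boldsymbol\Sigma^{-1}$ then follows from any standard factorization of the positive definite matrix $\boldsymbol\Sigma^{-1}$ (e.g., the symmetric square root, or a Cholesky factor).

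I do not anticipate a genuine obstacle here, as the argument is essentially a change-of-basis computation for a quadratic (Gram) form; the only point requiring a little care is the entry-wise justification for moving constant matrices through the integral sign and keeping track of which factor is transposed, so that the final condition comes out as $\boldsymbol\Phi^\top\boldsymbol\Phi = \boldsymbol\Sigma^{-1}$ rather than its transpose-mirrored form (which is the same statement since both sides are symmetric, but worth stating cleanly).
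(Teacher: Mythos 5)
Your proof is correct. The paper itself does not prove this lemma --- it simply states it as an adaptation of the lemma in the cited reference (Redd, 2012) --- and your argument is exactly the standard computation underlying that result: the Gram matrix of $\boldsymbol\Phi\mathbf{Z}_{k+1}(x)$ is $\boldsymbol\Phi\boldsymbol\Sigma\boldsymbol\Phi^{\top}$, and invertibility of $\boldsymbol\Phi$ turns the condition $\boldsymbol\Phi\boldsymbol\Sigma\boldsymbol\Phi^{\top}=\mathbf{I}$ into $\boldsymbol\Phi^{\top}\boldsymbol\Phi=\boldsymbol\Sigma^{-1}$ and back. The only point where you go a slightly different way than the paper's surrounding text is the positive definiteness of $\boldsymbol\Sigma$: you derive it directly from linear independence of the $ZB$-splines (using the preceding basis theorem), whereas the paper would obtain it from the factorization $\boldsymbol\Sigma=\mathbf{K}^{\top}\mathbf{D}\mathbf{M}\mathbf{D}\mathbf{K}$ in (\ref{sigma}) together with positive definiteness of $\mathbf{M}$ and full column rank of $\mathbf{D}\mathbf{K}$; both routes are valid, and yours has the advantage of being self-contained and independent of the matrix representation.
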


With respect to the definition of basis functions $\mathbf{Z}_{k+1}(x)=\left(Z_{-k}^{k+1}\left(x\right),\ldots,\right.$ $\left.
Z_{g-1}^{k+1}\left(x\right)\right)^{\top}$
the matrix $\boldsymbol\Sigma$ can be expressed as
\begin{equation}\label{sigma}
  \boldsymbol\Sigma \; = \; \mathbf{K}^{\top} {\bf D}\mathbf{M}\mathbf{DK},
\end{equation}
where ${\bf M}\, :=\, {\bf M}_{k0}$. The linear transformation $\boldsymbol\Phi$ is not unique and can be computed for example by the Cholesky
decomposition. The basis functions
$$
\mathbf{O}_{k+1}(x)=\boldsymbol\Phi\mathbf{Z}_{k+1}(x),\qquad \mathbf{O}_{k+1}(x)=(O_{-k}^{k+1}\left(x\right),\ldots,O_{g-1}^{k+1}\left(x\right))^{\top}
$$
are orthogonal and have a zero integral. The linear and quadratic $ZB$-splines with zero integral and their orthogonalization are plotted in Figures
\ref{o8} and \ref{o9}.

\begin{figure}[!ht]
  \begin{center}
    \includegraphics[width=15cm,height=6cm]{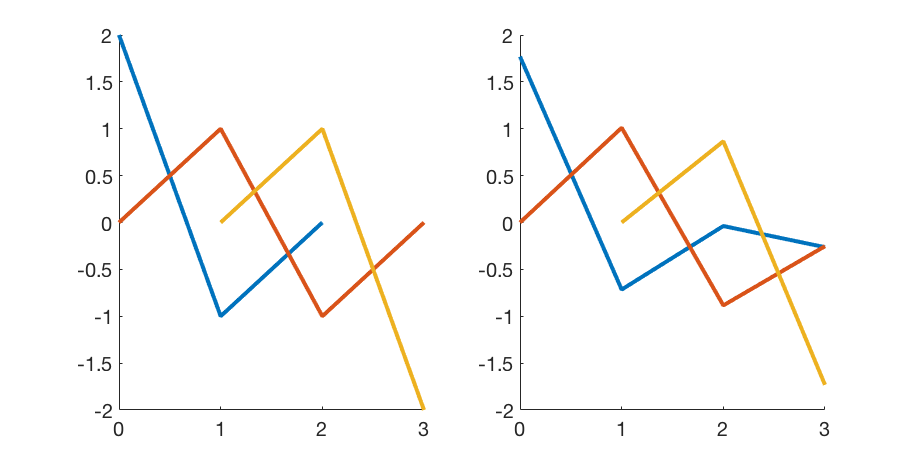}
     \caption{\label{o8} Linear $ZB$-splines $Z_i^{2}(x)$ with given knots $0,0,1,2,3,3$ (left), linear orthogonal $ZB$-splines $O_i^{2}(x)$ (right).}
  \end{center}
\end{figure}

\begin{figure}[!ht]
  \begin{center}
    \includegraphics[width=15cm,height=6cm]{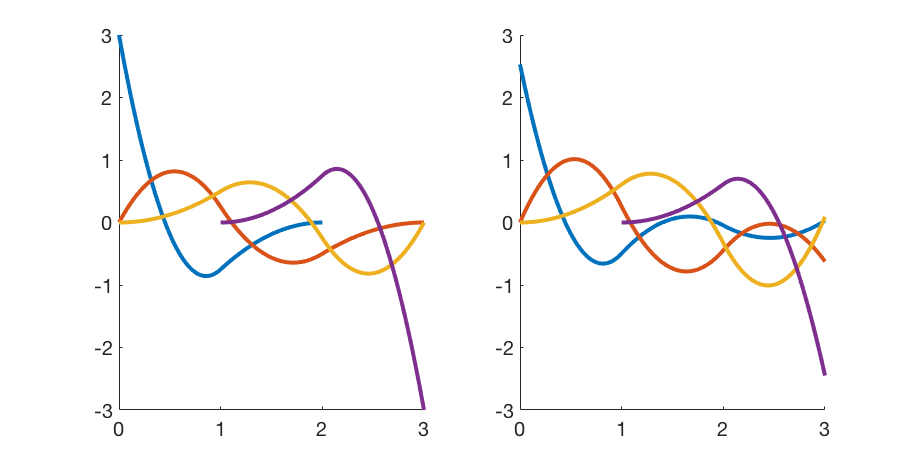}
     \caption{\label{o9} Quadratic $ZB$-splines $Z_i^{3}(x)$ with given knots $0,0,0,1,2,3,4,4,4$ (left), quadratic orthogonal $ZB$-splines $O_i^{3}(x)$ (right).}
  \end{center}
\end{figure}

To sum up, the spline $s_{k}(x)$ with zero integral can be constructed as a linear combination of orthogonal splines $O_i^{k+1}(x)$ having zero integral in a form
$$
s_{k}(x) \; = \; \sum\limits_{i=-k}^{g-1}z_i O_{i}^{k+1}\left(x\right) \; = \; \mathbf{O}_{k+1}(x)\mathbf{z}.
$$
On the other hand, the standard and well-known $B$-splines $B_i^{k+1}(x)$ can be used to represent $s_{k}(x)\in{\cal Z}_{k}^{\Delta\lambda}[a,b]$
in matrix form
$$
s_{k}(x) \; = \; \boldsymbol\Phi\mathbf{B}_{k+1}(x)\mathbf{DK}\mathbf{z}.
$$
This formulation seems to be very useful because it allows us to use existing $B$-spline codes in software R or Matlab, for example, the collocation matrix or computation of integrals in (\ref{mkl}).


\section{Compositional splines in the Bayes space}


Construction of splines directly in $L_0^2(I)$ has important practical consequences, however, it is crucial also from the theoretical perspective.
Expressing splines as functions in $L_0^2(I)$ enables to back-transform them to the original Bayes space $\mathcal{B}^2(I)$ using
(\ref{fclrinv}). It results in \textit{compositional $CB$-splines}, obtained from (\ref{defZ}) as
\begin{equation}
\label{sbplines}
\zeta_i^{k+1}(x)=\frac{\exp[Z_i^{k+1}(x)]}{\int_I\exp[Z_i^{k+1}(y)]\,\mbox{d}y},\quad i=-k,\ldots,g-1,\ k\geq 0.
\end{equation}
Accordingly, for instance $ZB$-splines from Figures \ref{o1}-\ref{o5} can be now expressed directly in the Bayes space $\mathcal{B}^2(I)$ as
$CB$-splines, see Figures \ref{o10}-\ref{o12}. Note that $CB$-splines $\zeta_i^{k+1}(x)$ fulfill the unit integral constraint which is, however, not necessary for further considerations. As a consequence, it is
immediate to define vector
space ${\cal C}_{k}^{\Delta\lambda}[a,b]$ of compositional polynomial splines of degree $k>0$, defined on a finite interval $[a,b]$ with the sequence
of knots $\Delta\lambda$. From isomorphism between ${\cal C}_{k}^{\Delta\lambda}[a,b]$ and ${\cal Z}_{k}^{\Delta\lambda}[a,b]$
it holds that
$$
\dim\left({\cal C}_{k}^{\Delta\lambda}[a,b]\right)=g+k.
$$
Moreover, from isometric properties of clr transformation it follows that every compositional spline $\xi_k(x)\in{\cal C}_{k}^{\Delta\lambda}[a,b]$ in
$\mathcal{B}^2(I)$
can be uniquely represented as
\begin{equation}
\label{ssplinerep}
\xi_k(x)=\bigoplus_{i=-k}^{g-1}z_i\odot\zeta_i^{k+1}(x).
\end{equation}
$CB$-splines $\zeta_i^{k+1}(x)$ forming the basis are by the default setting (\ref{defZ}) not orthogonal. Their orthogonalization can be done as
described in Section \ref{orthogonalization}, i.e. by employing $L_0^2(I)$ with the back-transformation to $\mathcal{B}^2(I)$.

\begin{figure}[!ht]
  \begin{center}
    \includegraphics[width=6cm,height=4cm]{linear_Z.png}
    \includegraphics[width=6cm,height=4cm]{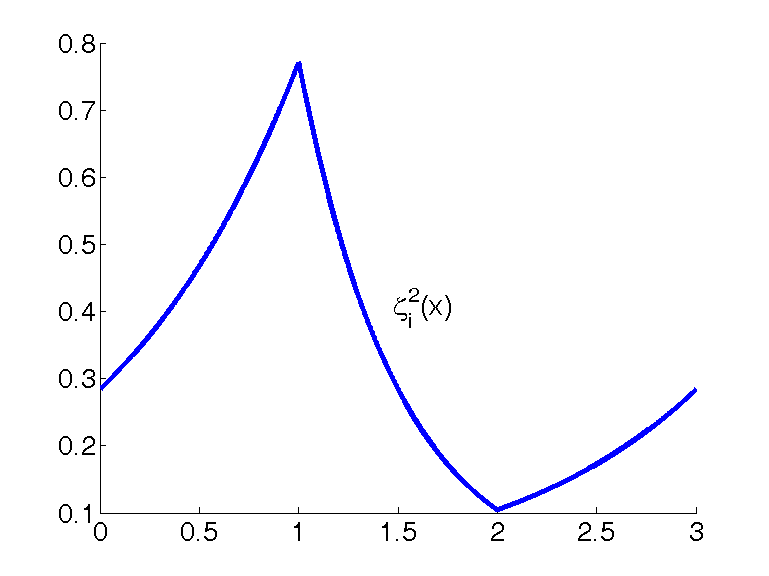}
     \caption{\label{o10} Linear $ZB$-spline $Z_i^2(x)$ (left) and linear $CB$-spline $\zeta_i^2(x)$ (right) with equidistant knots $0,1,2,3$.}
  \end{center}
\end{figure}

\begin{figure}[!ht]
  \begin{center}
    \includegraphics[width=6cm,height=4cm]{quadratic_Z.png}
    \includegraphics[width=6cm,height=4cm]{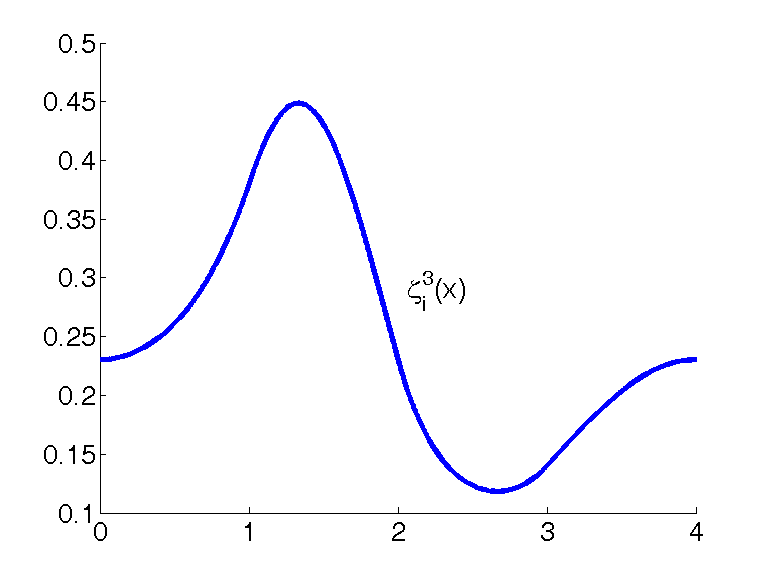}
     \caption{\label{o11} Quadratic $ZB$-spline $Z_i^3(x)$ (left) and quadratic $CB$-spline $\zeta_i^3(x)$ (right) with equidistant knots $0,1,2,3,4$.}
  \end{center}
\end{figure}

\begin{figure}[!ht]
  \begin{center}
    \includegraphics[width=6cm,height=4cm]{quadratic_Z_ne.png}
    \includegraphics[width=6cm,height=4cm]{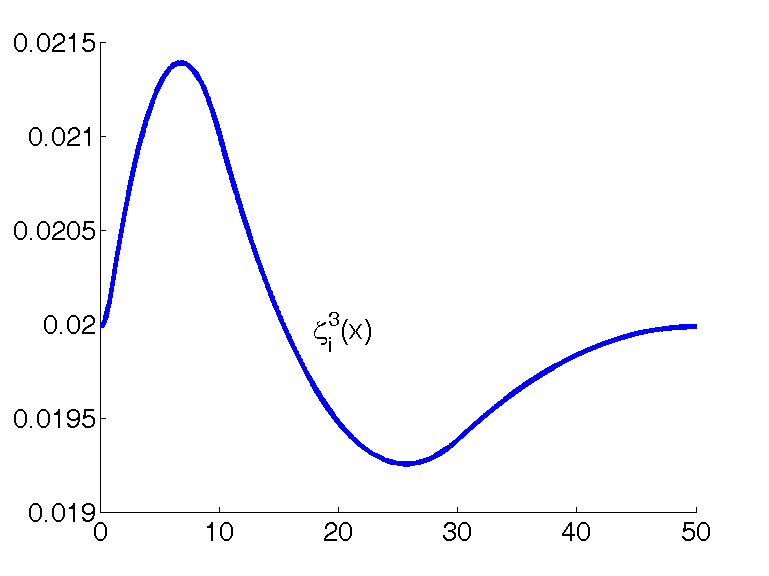}
     \caption{\label{o12} Quadratic $ZB$-spline $Z_i^3(x)$ (left) and quadratic $CB$-spline $\zeta_i^3(x)$ (right) with nonequidistant knots $0,1,10,30,50$.}
  \end{center}
\end{figure}

The resulting compositional splines (with either orthogonal, or non-ortho\-gonal $CB$-spline basis) can be used for representation of densities
directly in $\mathcal{B}^2(I)$. This is an important step in construction of methods of functional data analysis involving density functions, like
for ANOVA modeling \cite{boogaart14} or for the SFPCA method introduced in \cite{hron16} and demonstrated in the next section.
In the latter case $CB$-splines were indeed already used
for construction of the procedure, although at the respective level of development the authors were not aware of that. With $CB$-splines one has a
guarantee that methods are developed consistently in the Bayes space. Moreover, the possibility of having an orthogonal basis enables to gain additional
features resulting from orthogonality of finite dimensional projection in combination with approximate properties of spline functions.

As usual, compositional splines can be tuned according to concrete problem, with the advantage of their direct formulation in the Bayes space sense.

\begin{example}
To illustrate smoothing of concrete data with a compositional spline, 1000 values from standard normal distribution were simulated and the support was determined by minimum and maximum simulated values, $I=[x_{min},x_{max}]$. Data were collected in a form of histogram, where the breakpoints are equidistantly spaced. The representative points of the histogram cells are denoted as asterisks in Figure \ref{o13} (left) and these points form the input data $(x_i,y_i)$, $i=1,\ldots,n$ for smoothing purpose. The $y$-values stand for discretized relative contributions to the overall probability mass, therefore discrete clr transformation \cite{aitchison86} is needed to obtain a real vector with zero sum constraint (Figure \ref{o13}, right). These data points were smoothed using the procedure from Section \ref{smoothing} and back-transformed to the original space. In the concrete setting $k=2$, $l=1$, $\alpha=0.5$,
$\Delta\lambda=\{x_1,-2,-1,0,1,2,x_n\}$, $w_i=1$, $\forall i=1,\ldots,n$ were considered. The resulting spline $s_2(x)$ with zero integral on interval
$[x_{min},x_{max}]$ is plotted in Figure \ref{o13} (right). In the left plot the compositional spline $\xi_2(x)$ with unit integral by using (\ref{fclrinv}) is depicted.

\begin{figure}[!ht]
  \begin{center}
    \includegraphics[width=6cm,height=4cm]{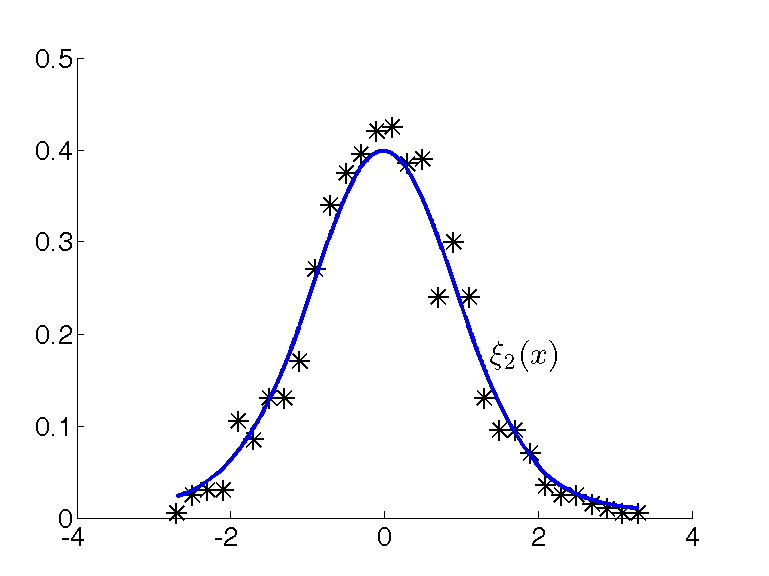}
    \includegraphics[width=6cm,height=4cm]{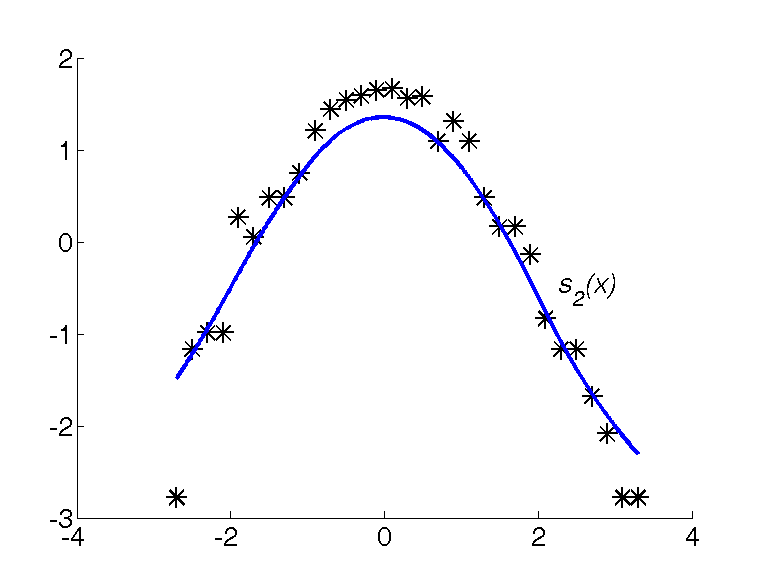}
    \caption{\label{o13} Smoothing of simulated standard normal values in the Bayes space (left) and for clr transformed data (right).}
  \end{center}
\end{figure}
\end{example}


\section{Application to anthropometric data}
For the purpose of illustrating the smoothing procedure outlined in Section \ref{smoothing}, a real-world data set dealing with the most commonly used anthropometric measure relating to body weight is presented. The data set we consider collects the body weight of apparently healthy Czech adolescents and young adults aged 15-31 years (the total of 4436 records) which were recruited non-randomly by offering free body composition assessment. Body weight was measured by the InBody 720 device (Biospace Co., Ltd, Seoul, Korea), recorded as the total body mass rounded to the nearest 0.1 kg.

The raw data for each of $N=16$ age groups, i.e. $\left[15,16\right), \left[16,17\right),\ldots$, $\left[30,31\right)$, were turned into a form of histogram data as follows. The sampled values of the body weight in each age group were divided into equally-spaced classes of the united range 40-110 kg and the optimal number of classes, denoted by $q_i$, $i=1,\ldots,16$, was set according to the well-known Sturge's rule separately across the age groups. Because of the insufficient number of sampled data for males and females in each age group, gender information was ignored. Although there might be some differences between male and female samples due to sex dimorphism, they are not that dramatic (e.g., contrary to body height) because the weight is influenced also by external factors (nutrition, physical activity), and still allow for a reasonable aggregation of data. Subsequently, the proportions in classes within each age group were computed and present zero-values (zero counts in the respective classes) were imputed by values $(2/3)\cdot(1/n_i),i=1, \ldots, N$ according to \cite{martin15}, where $n_i$ stands for the number of observations in $i$th age group. Finally, the raw discretized density data $f_{i,j}, i=1,\ldots,N, j=1, \ldots, q_i$ which correspond to the midpoints $t_{i,j}$ of classes, $i=1,\ldots,N, j=1, \ldots, q_i$ (i.e., $f_{i,j} = f(t_{i,j}))$, were obtained by dividing (not necessary normalized) proportions $\textbf{p}_{i}=\left(p_{i,1},\ldots, p_{i,q_i}\right)^{\top}$ of counts in classes by the length of the respective intervals resulting from partition of the weight range in each of age groups. Figure \ref{fig:data_hist} shows four examples of histograms with different number of classes together with raw data (Table \ref{tab:rawdata}) to be smoothed. To do so, their transformation into real vectors is needed. We note that if the histogram data are constructed on subintervals of the same length, i.e. with equally-spaced breakpoints, it enables to use the discrete version of the clr transformation \cite{aitchison86} directly on the vector of proportions $\textbf{p}_{i}, i=1,\ldots,N$ by considering the scale invariance property. Otherwise, the input of the clr transformation must be vectors with raw density data $\textbf{f}_i=\left(f_{i,1},\ldots, f_{i,q_i}\right)^{\top}, i=1,\ldots,N$ in order to avoid misleading results which would not reflect the actual behavior of data. Vectors of clr transforms are hereafter denoted as $\clr (\textbf{f}_i)=\left(\clr (f_{i,1}),\ldots, \clr (f_{i,q_i})\right)^{\top}$ for $i=1,\ldots,N$ and clr values are listed in Table \ref{tab:clrdata}.

Having the collected data $(t_{i,j},\clr (f_{i,j}))$, we proceed to smooth them with the compositional smoothing splines using a system of $ZB$-spline basis functions from the $L^2_0(I)$. They are considered on domain $I = \left[40,107\right]$ which has been modified in order to avoid undesired artifacts in densities at their right-hand side. For all $N$ observations, the same strategy was followed to set the values of the input parameters for the smoothing procedure. We employed cubic smoothing splines ($k=3,l=2$) with the given sequence of knots $\Delta \lambda =\{40,62,84,107\}$, the vectors of weights $\textbf{w}_{i}$ for all input data equal to vectors of ones and the smoothing parameter $\alpha$ was set to $0.5$. That is, when minimizing the penalized functional (\ref{jl}), the same importance is assigned to both smoothness of the smoothing splines as well as to their approximative properties. The resulting compositional smoothing splines are obtained via their clr representation
\begin{equation} \label{SBspline-weight}
s_3^i(t) = \sum_{\nu=-3}^{1} z_{i,\nu} Z_{\nu}^{4}(t), \quad i= 1,\ldots, N, \quad t \in I;
\end{equation}
the corresponding $ZB$-spline coefficients are reported in Table \ref{tab:ZB-coef}.

Figure \ref{fig:data_spliny_example} displays an example of three raw density data from Figure \ref{fig:data_hist} together with smoothed curves in the $L^2_0$ space (right) and after the inverse transformation (\ref{fclrinv}) in the $\mathcal{B}^2$ space (left). The whole sample of smoothed density functions (Figure \ref{fig:data_spliny_all}) is plotted on blue scale distinguishing the age groups. Two trends are apparent -- in the younger age groups, the estimated density functions are right skewed and exhibit lower variability while with increasing age they become more symmetric followed by higher variability. Nevertheless, density function in age group $\left[23,24\right)$ does not fully respect this behavior: the variability trend holds, but the distribution of weights is more similar to those in younger age groups as it is skewed more to the left. In general, it seems that adolescents appear to be predominantly of a lower body weight than the older persons whose weight is more spread over the weight classes and more pronounced in the middle part of the distribution. Accordingly, there is also a higher incidence of higher weights in comparison with the younger adolescents.

\begin{figure}[t!]
    \centering
     \includegraphics[width=0.5\textwidth]{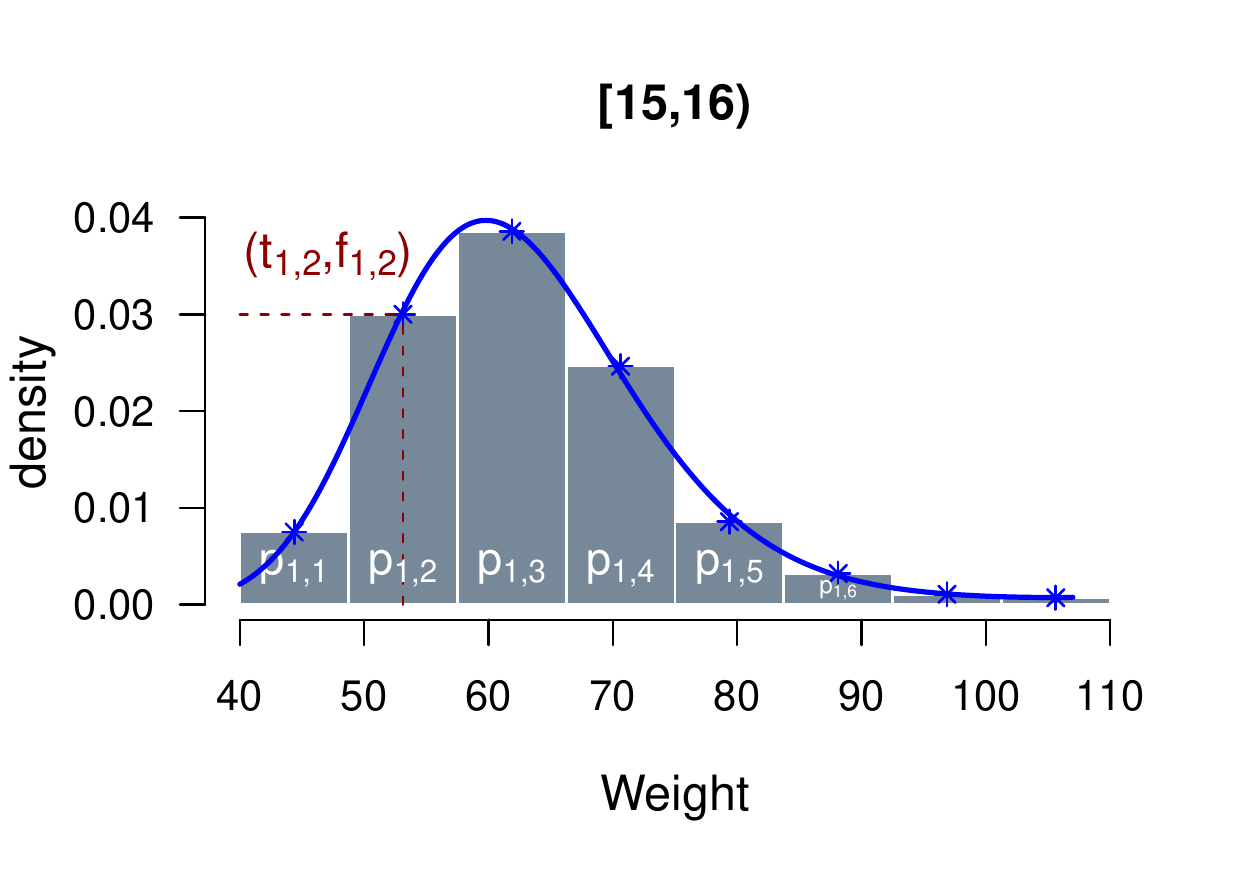}\includegraphics[width=0.5\textwidth]{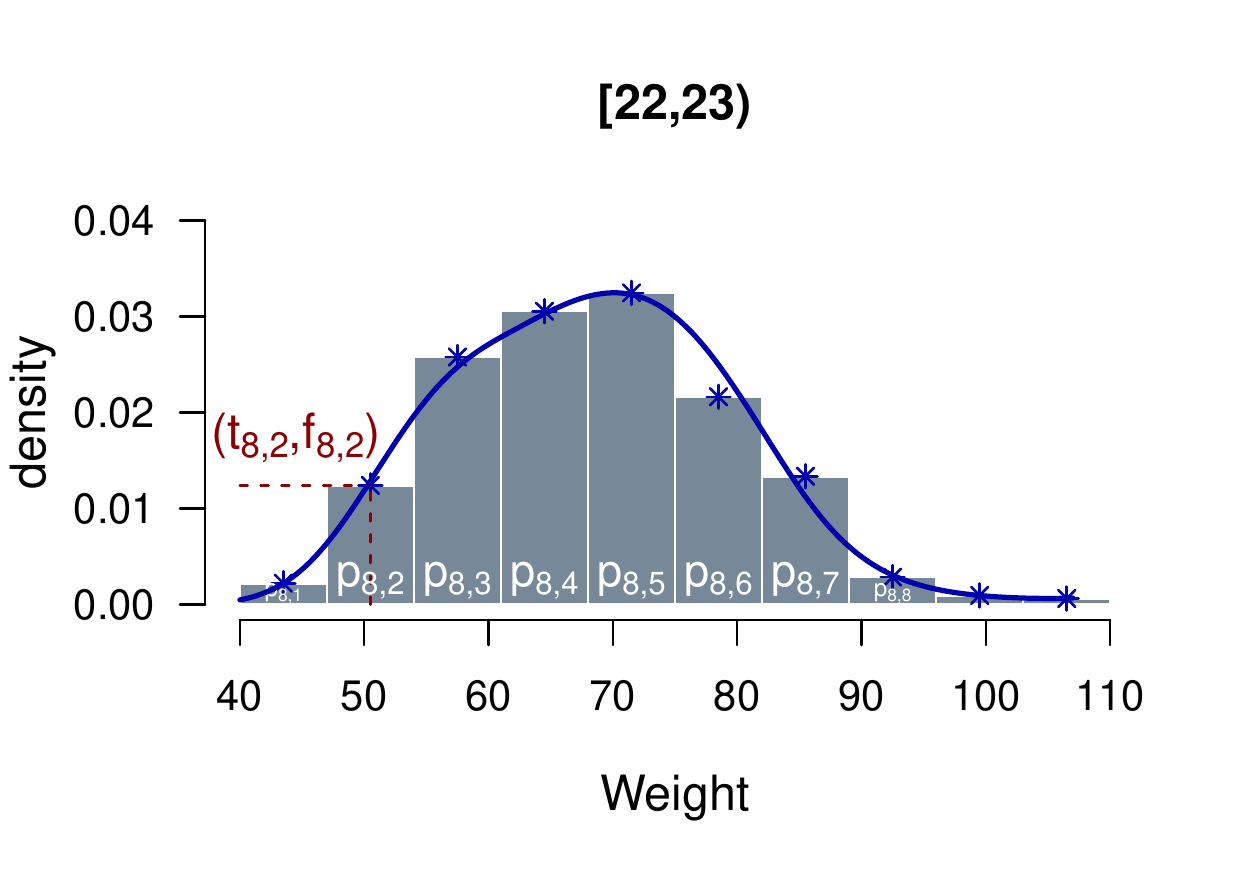} \\
		 \includegraphics[width=0.5\textwidth]{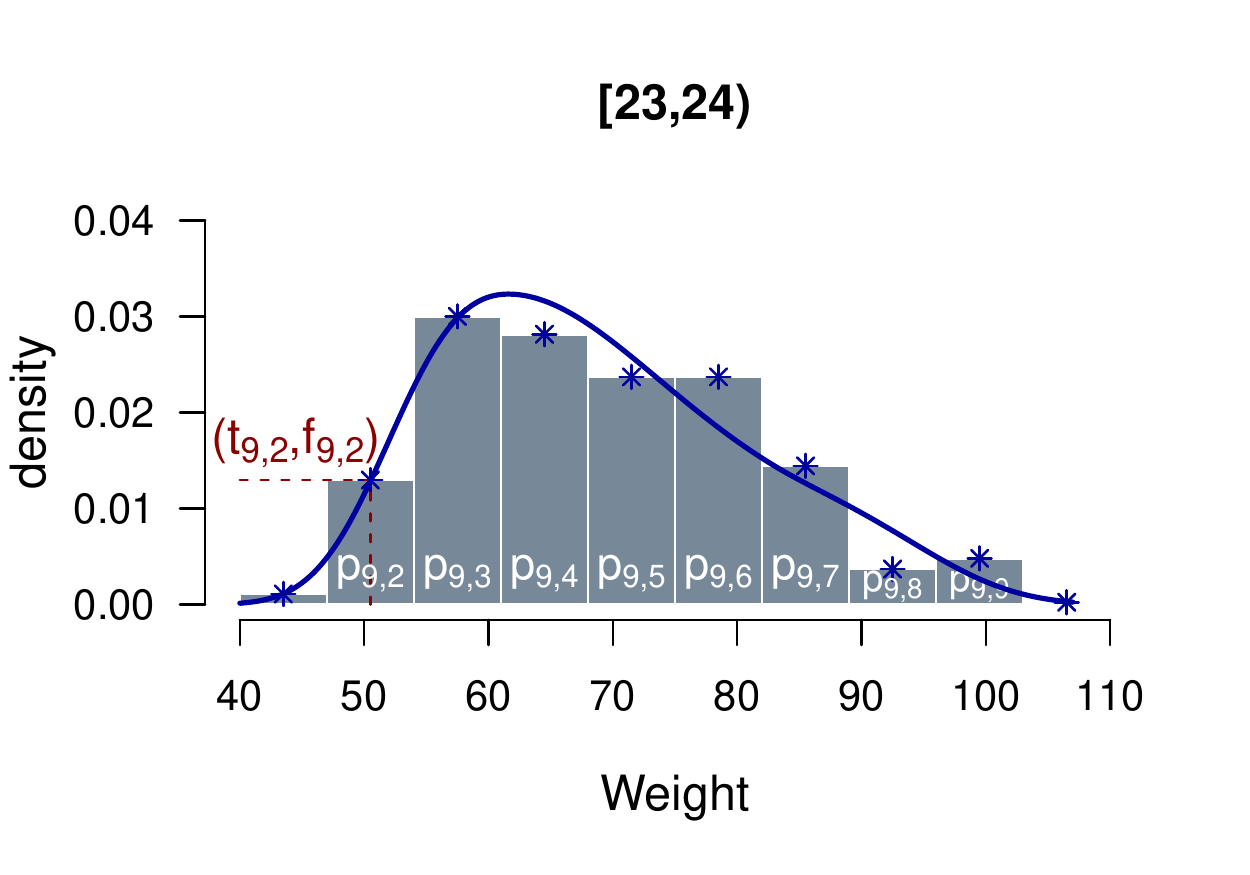}\includegraphics[width=0.5\textwidth]{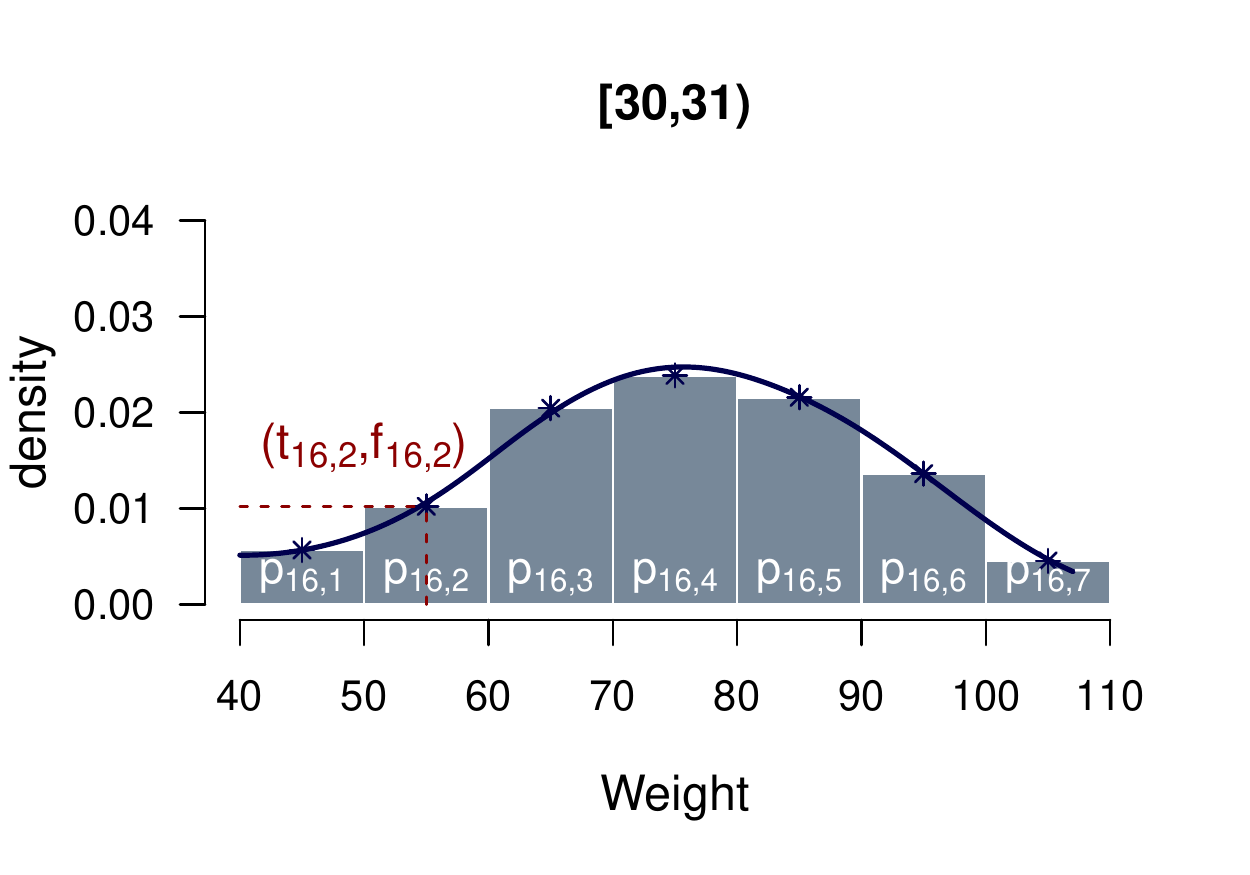}	
\caption{Histograms for four age groups: $\left[15,16\right), \left[22,23\right), \left[23,24\right)$ and $\left[30,31\right)$ together with estimated probability density functions via compositional smoothing splines. Asterisks indicate discrete data $(t_{i,j},f_{i,j}), i=1,8,9,16, j = 1,2,\ldots, q_i$, and $p_{ij},  i=1,8,9,16, j = 1,2,\ldots, q_i$ indicate proportions of equidistant classes resulted for given partition of the range weight body values.}
\label{fig:data_hist}
\end{figure}

Importantly, the quality of smoothing is the same irrespective of the shape of the distribution. It might just happen that the smoothed densities exhibit heavier tails although they are not indicated by the data $f_{i,j}$, even with some artifacts typical for overfitting. This is obviously due to keeping the zero integral constraint, which is more sensitive to deviations from monotonic character of densities at their tails. A possible way out, applied here, was to reduce slightly the range from $I = \left[40,110\right]$ to $I = \left[40,107\right]$ in order to keep predominantly the monotonic behavior of (normalized) counts $f_{i,j}$ at the right tails.

\begin{figure}[t!]
    \centering
		\begin{subfigure}[b]{1\textwidth}
        \centering
        \includegraphics[width=0.45\textwidth]{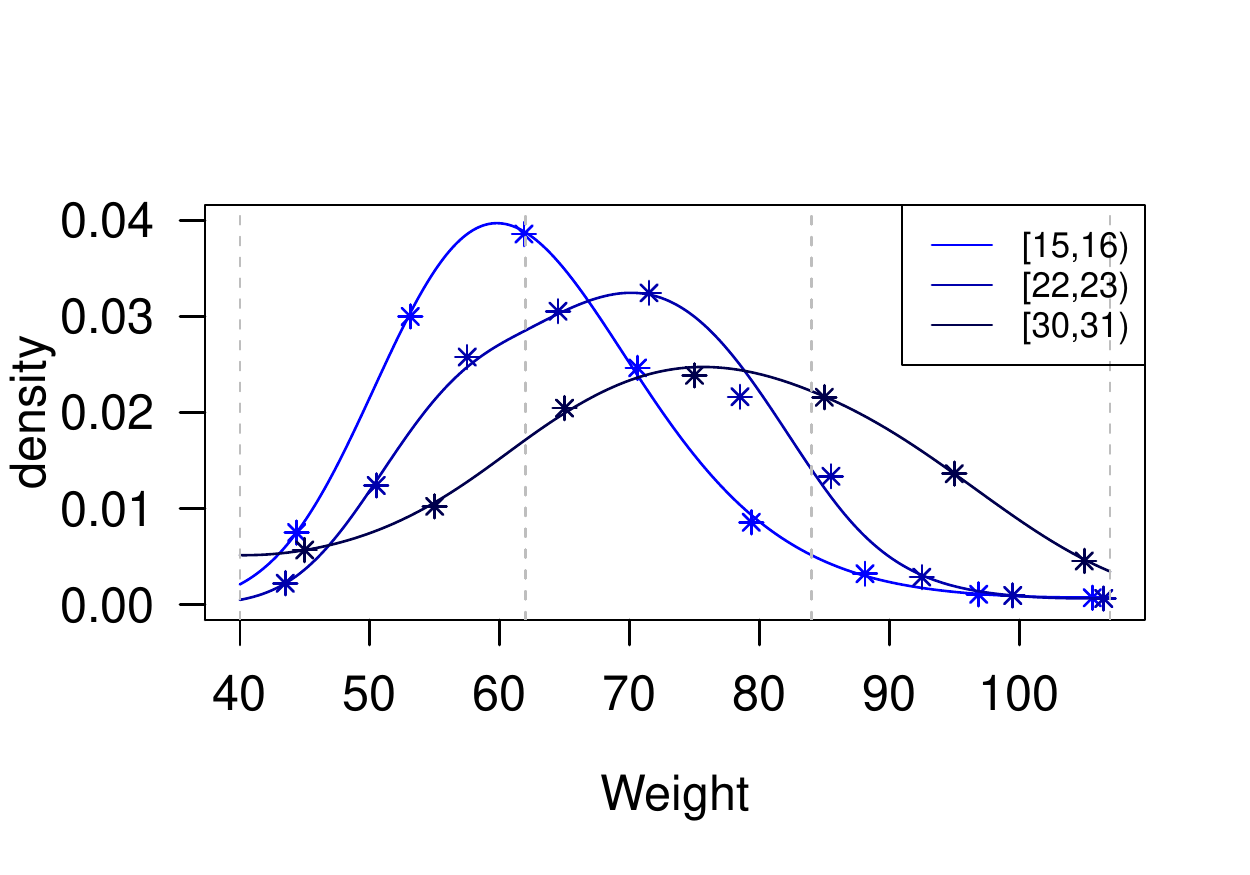}\includegraphics[width=0.45\textwidth]{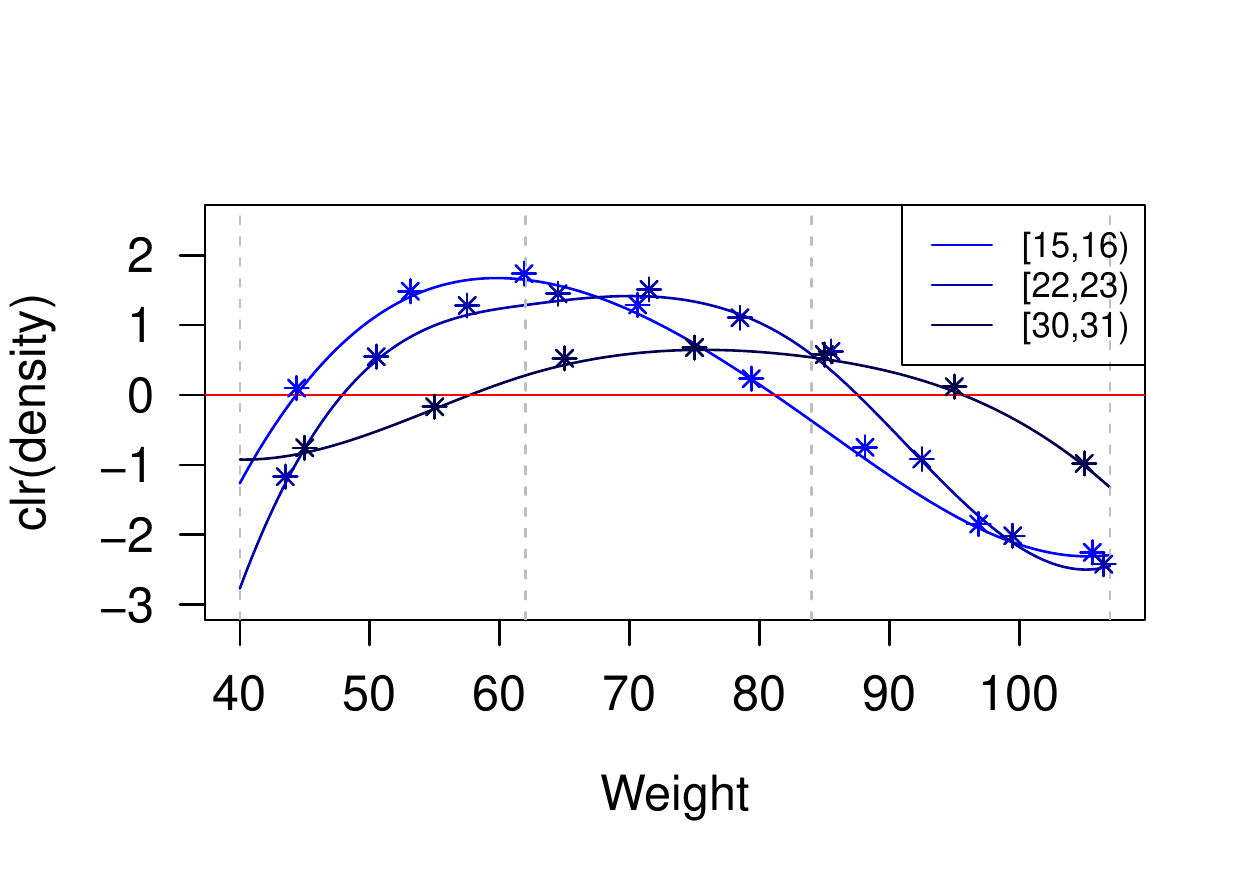}
        \caption{Example of smoothed raw density data in $\mathcal{B}^2$ (left) and $L^2_0$ (right) spaces.}
				\label{fig:data_spliny_example}
		\end{subfigure}
    \begin{subfigure}[b]{1\textwidth}
        \centering
				\includegraphics[width=0.45\textwidth]{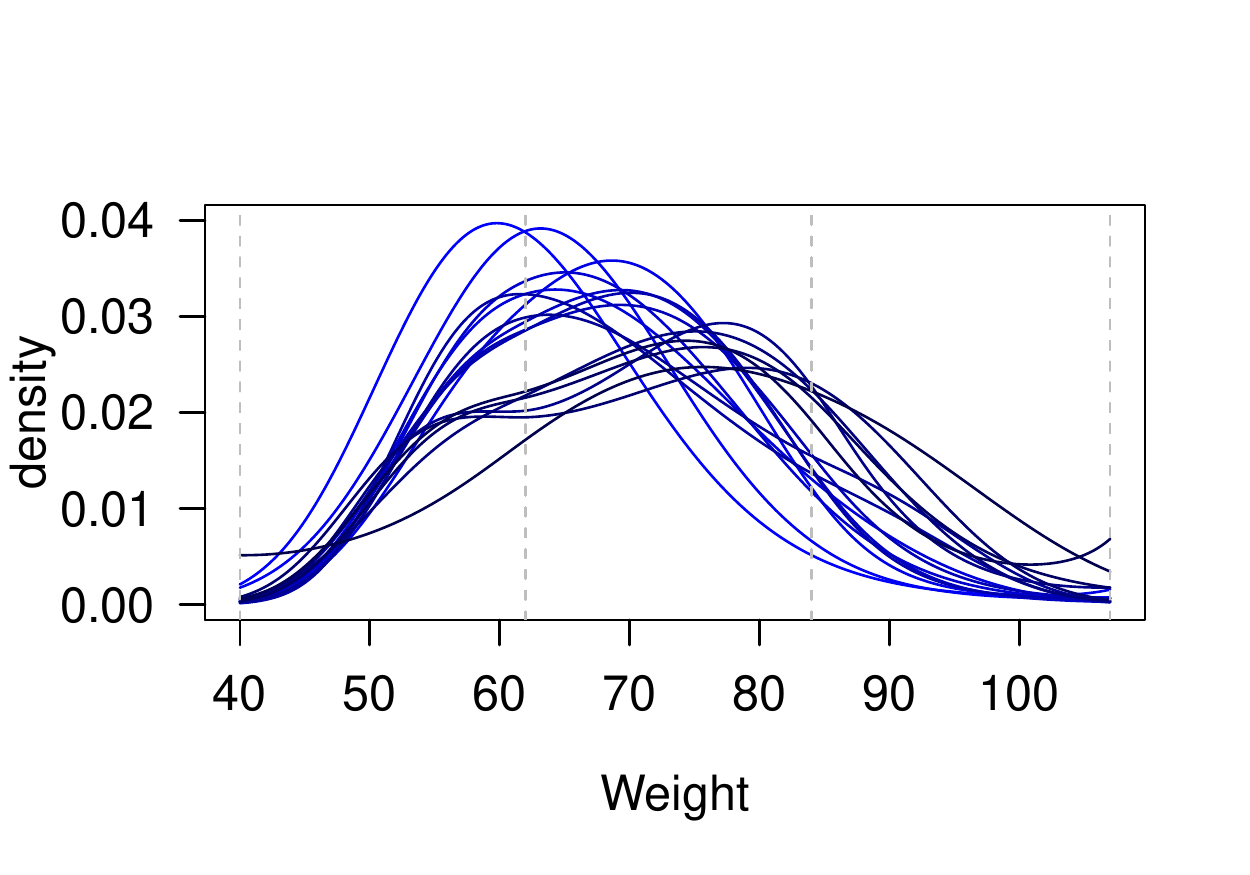}\includegraphics[width=0.45\textwidth]{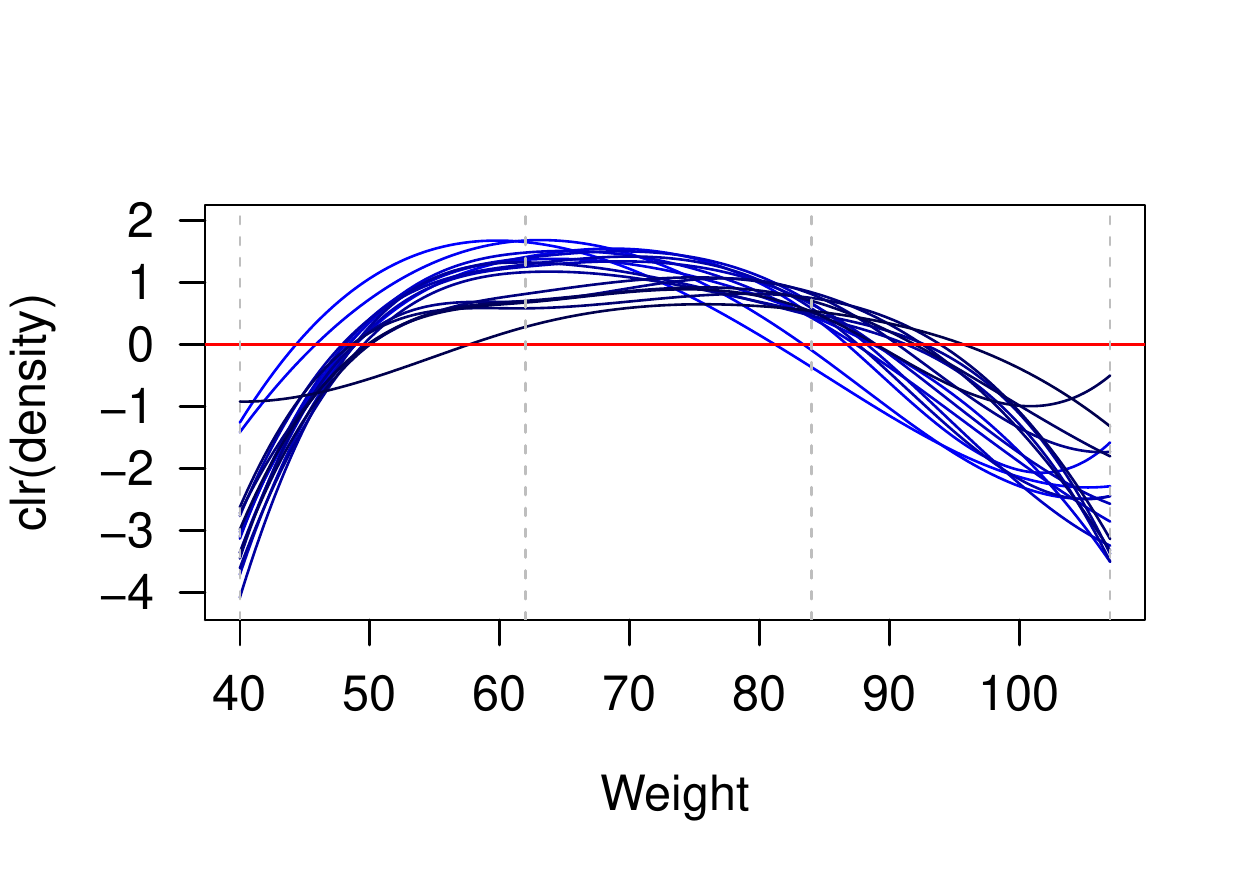}
        \caption{Smoothed raw density data in $\mathcal{B}^2$ (left) and $L^2_0$ (right) spaces.}
				\label{fig:data_spliny_all}
    \end{subfigure}
		\caption{Smoothed weight density functions via compositional smoothing splines in $\mathcal{B}^2$ space and its clr transformation in $L^2_0$ (right) spaces. Data are displayed on blue scale distinguishing age groups: with increasing age, the intensity of blue color increases. Vertical dashed gray lines indicate knots position.}
\label{fig:data_spliny}
\end{figure}

Of course, the smoothed data can be further analyzed using methods of functional data analysis \cite{ramsay05}, adapted in order to respect specific properties of densities. It is demonstrated here for case of the compositional functional principal component analysis (SFPCA) \cite{hron16}. This statistical tool has been recently designed based on the Bayes space methodology, so it enables to capture the main modes of \textit{relative} variability in a data set consisting of sampled density functions. Given a data set of $N$ zero-mean functional observations $X_1,\ldots,X_N$ in $\mathcal{B}^2(I)$, SFPCA aims to find the (normalized and orthogonal) directions of maximum variability in dataset, i.e., a collection of density functions $\left\{\theta_\kappa\right\}_{\kappa \geq 1}$ called simplicial functional principal components (SPFCs) which maximize the following objective function over $\theta \in \mathcal{B}^2(I)$,
\begin{equation} \label{eq_max}
\sum_{i=1}^N \left\langle X_i, \theta\right\rangle_{\mathcal{B}}^2 \: \text{subject to }
\left\| \theta \right\|_{\mathcal{B}} = 1; \: \text{with} \qquad \left\langle \theta, \theta_\kappa \right\rangle_{\mathcal{B}} =0, \: \kappa<K,
\end{equation}
where the orthogonality condition is assumed to be fulfilled for $K\geq 2$. Since $\left\langle X_i, \theta \right\rangle_{\mathcal{B}}$ represents a projection of $X_i$ along the direction $\theta \in \mathcal{B}^2(I)$, in fact we look for orthogonal basis functions in $\mathcal{B}^2(I)$ maximizing the relative variability of these projections. The maximization task (\ref{eq_max}) is efficiently implemented by applying the clr transformation (\ref{fclr}) and the output can be back-transformed from the $L^2(I)$ to the $\mathcal{B}^2(I)$ space, as detailed in \cite{hron16}.

The interpretation of SFPCs can be performed by displaying:
\begin{itemize}
	\item individual SFPCs (as clr transformed density functions, SFPCs always represent contrasts between the parts of domain $I$);
	\item overall mean density function $\bar{X}$ along with its perturbation by SFPCs powered by a suitable constant,
	\begin{equation}
	\begin{split}
	\bar{X} \oplus  \sqrt{\rho}_\kappa \odot \theta_\kappa, \quad
	\bar{X} \ominus \sqrt{\rho}_\kappa \odot \theta_\kappa,
	\end{split}
	\end{equation}
	where ${\rho}_\kappa$ is an amount of the variability of the dataset along the direction $\textit{x}_\kappa$ and it holds ${\rho}_1 \geq {\rho}_2 \geq \ldots$. This is a natural choice because SFPCs represent variation around the overall mean density function;
	\item  the projection of dataset along the directions $\theta_\kappa$,
	$$
	\left\langle X_i, \theta_\kappa \right\rangle_{\mathcal{B}} \odot \theta_\kappa = \textit{x}_{i\kappa} \odot \theta_\kappa, \quad i=1,\ldots,N,
	$$
	where $\textit{x}_{i\kappa} = \left\langle X_i, \theta_\kappa \right\rangle_{\mathcal{B}}, i=1,\ldots,N$ are so called principal component scores associated with the $\kappa$th SFPCs $\theta_\kappa$.	The scores can be plotted for pairs of the first SFPCs to assess the relationship among sampled density functions or to reveal presence of outlying observations;
	\item to complete the above interpretation it is important to note that the original functions $X_1,\ldots,X_N$ and the eigenfunctions $\theta_{\kappa},\kappa\geq 1$ are based on a $CB$-spline basis expansion 
\begin{equation}
\label{SB-splines-SFPCA}
X_i(\cdot)=\bigoplus_{\nu=-3}^1{c_{i,\nu}\odot\zeta_{\nu}(\cdot)}, \quad
\theta_{\kappa}(\cdot)=\bigoplus_{\nu=-3}^1{b_{\kappa,\nu}\odot\zeta_{\nu}(\cdot)},
\end{equation}
respectively, by considering (\ref{SBspline-weight}).
\end{itemize}

SFPCA is also a statistical method for reducing dimensionality of dataset. The number of SFPCs can be determined from the scree plot which displays cumulative percentage of the total variance explained by each subsequent SFPC. That is, the dimensionality is identified by a point in scree plot at which explained variability drops off.

For the actual computation, $CB$-splines (\ref{SB-splines-SFPCA}) of the input functional observations, represented by the corresponding $ZB$-splines (\ref{SBspline-weight}), were expressed by $B$-splines with $B$-spline coefficients (listed in Table \ref{tab:B-coef}) using formula (\ref{mn}). The output of SFPCA for body weight density functions is reported in Figure \ref{fig:sfpca}. According to scree plot (Figure \ref{fig:sfpca-scor}), two or three SFPCs should be taken, but we resort to use only first two of them which capture together almost 85$\%$ of the total variability of the data set. The first SFPC (Figure \ref{fig:sfpca-har1}) represents the contrast between the weight below and above 78 kg, which could be considered as a reference (average) weight. Hence, higher scores along the SFPC$_1$ are expected for age groups with higher incidence of individuals with higher body weight than the average, and, conversely, lower scores are associated with age groups with higher incidence of individuals with lower body weight then the average. The interpretation of SFPC$_1$ can be obviously linked with age, see Figure \ref{fig:sfpca-scor}. The scree plot more or less separates rather right skewed weight density functions of younger age groups (located on the left in the scree plot) from those more symmetric ones associated with older age groups (located on the right in the scree plot).

The second SFPC (Figure \ref{fig:sfpca-har2}) characterizes the variability within the tails of density functions, i.e. the main contribution to the variability along SFPC$_2$ is provided by the lowest and highest weight values ($\leq$ 51 kg and $\geq$ 98 kg respectively). It contrasts low and high weights (associated with high scores along the SFPC$_2$) against middle weight values (associated with low scores along the SFPC$_2$), see Figure \ref{fig:sfpca-scor}. The consistent interpretation can be also observed from Figure \ref{fig:sfpc-harm12} which displays the variation along the first two directions -- SFPC$_1$ and SFPC$_2$ -- with respect to sample mean $\bar{f}(t), t \in I$ (i.e. $\bar{f} \oplus / \ominus 2\sqrt{\rho_{\kappa}} \odot \text{SFPC}_{\kappa}, {\kappa}=1,2$).

Figures \ref{fig:sfpca-proj1} and \ref{fig:sfpca-proj2}, respectively, represent two main modes of variability in the data set ($\left\langle f_i, \theta_\kappa \right\rangle_{\mathcal{B}} \odot \theta_\kappa, \kappa = 1,2, i=1,\ldots,N$). For instance, the variation along SFPC$_2$ is confirmed to be exhibited in tails of density functions and the observations with lowest (gold curve) and highest score (red curve) further support the conclusions made so far. The high scores along the second direction thus reflect heavier tails and, conversely, the low scores along the second direction reflects low incidence of individuals with \textit{extreme} (both small and high) weights. Nevertheless, the relationship of scores (Figure \ref{fig:sfpca-scor}) is apparent: at the beginning, they continue to fall, reach a bottom and then continue to grow. The relationship might be partially explained by unequal representation of men and women in age groups and unequal number of observations in these age groups. Another reason might be that data corresponding to age groups with low SFPC$_2$ scores were collected mostly from students of the Faculty of Physical Culture at Palack\'y University in Olomouc, Czech Republic, which form more homogeneous population than an average one. In any case, the second SFPC reveals an interesting feature which is worth to be further investigated.

\begin{figure}
    \centering
    \begin{subfigure}[t]{0.4\textwidth}
        \centering
        \includegraphics[width=\textwidth]{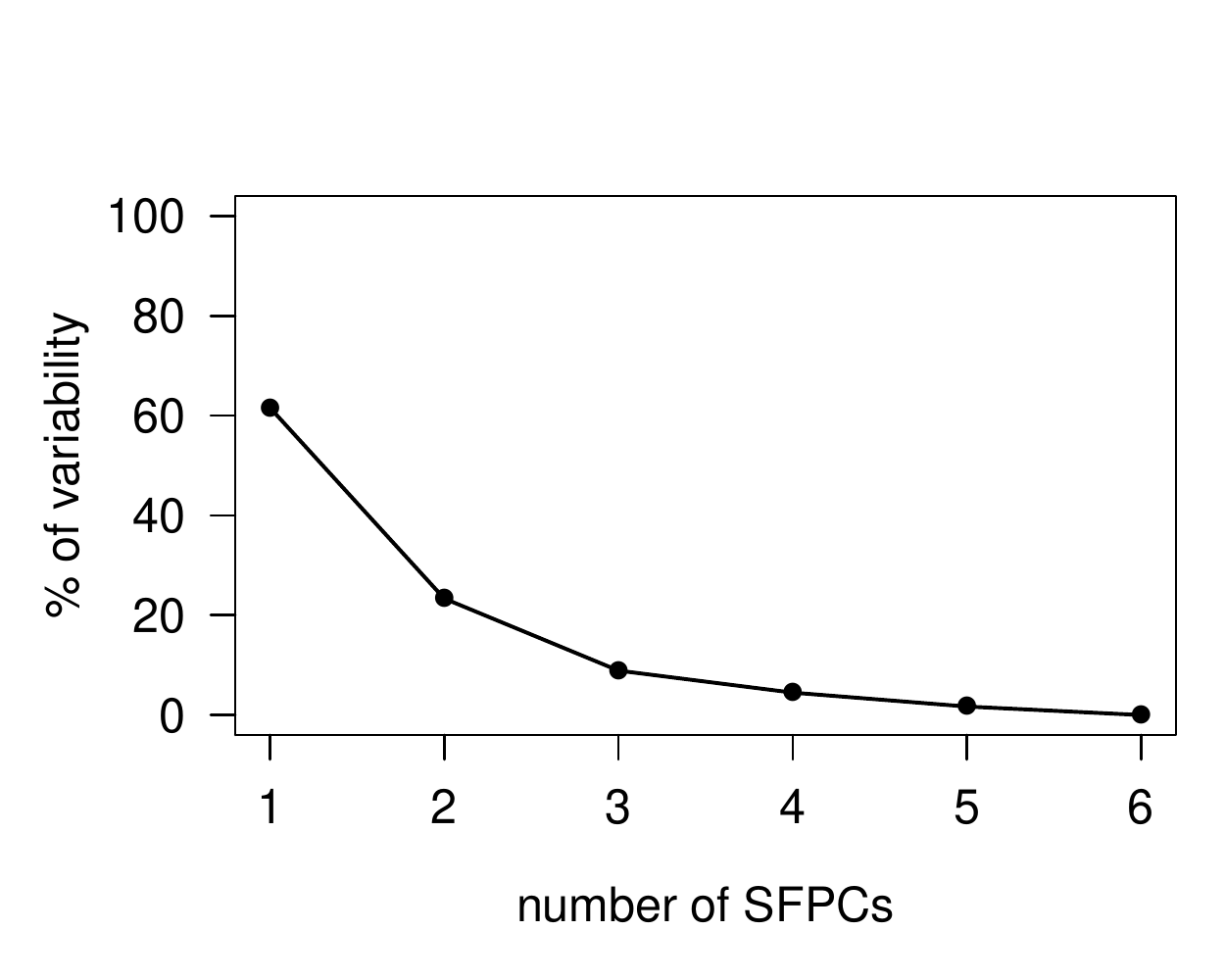}
				\caption{Explained variability.}
					\label{fig:sfpca-scree}
    \end{subfigure}
			\begin{subfigure}[t]{0.4\textwidth}
        \centering
        \includegraphics[width=\textwidth]{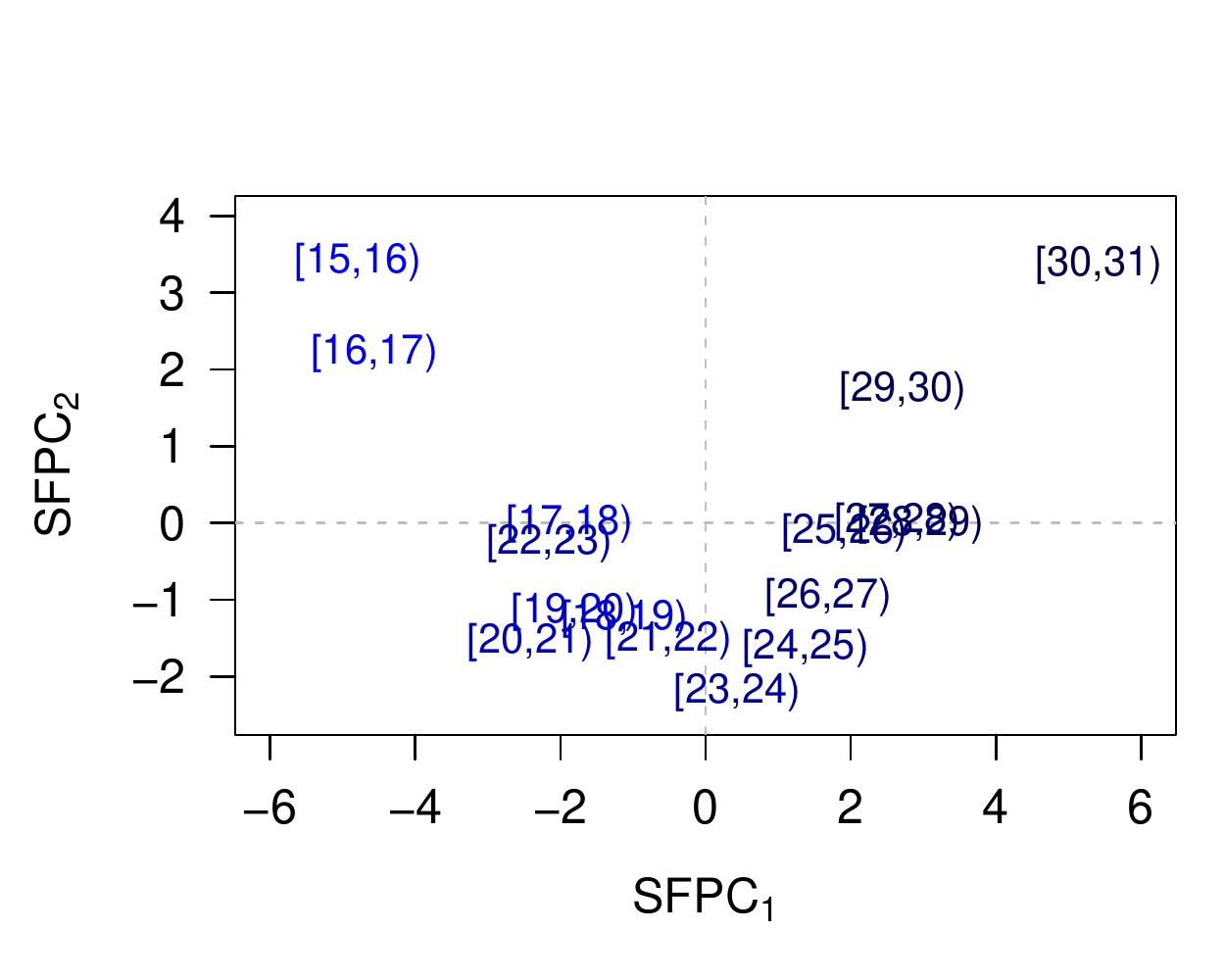}
				\caption{Scores for SFPC$_1$ and SFPC$_2$.}
				\label{fig:sfpca-scor}
	\end{subfigure}
    \centering
	\begin{subfigure}[t]{0.41\textwidth}
        \centering
        \includegraphics[width=\textwidth]{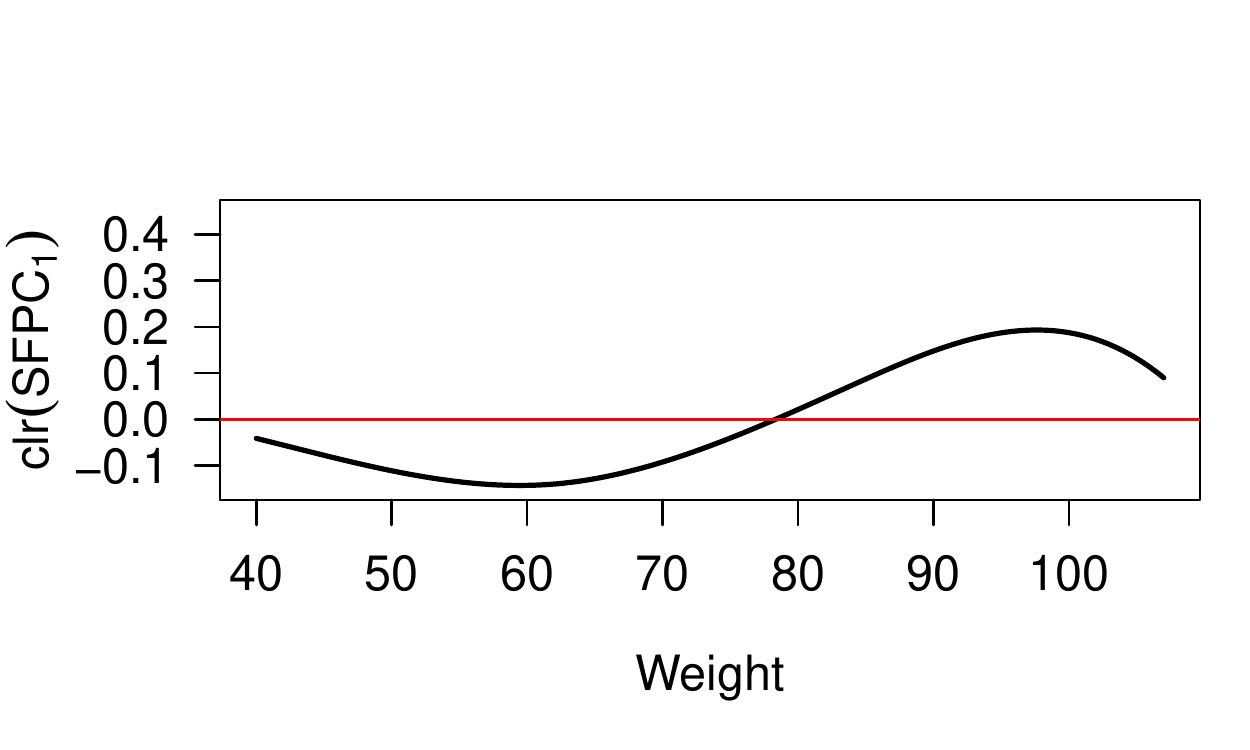}
        \caption{SFPC$_1$ ($61.6\%$ of variability).}
				\label{fig:sfpca-har1}
	\end{subfigure}
		\begin{subfigure}[t]{0.41\textwidth}
        \centering
        \includegraphics[width=\textwidth]{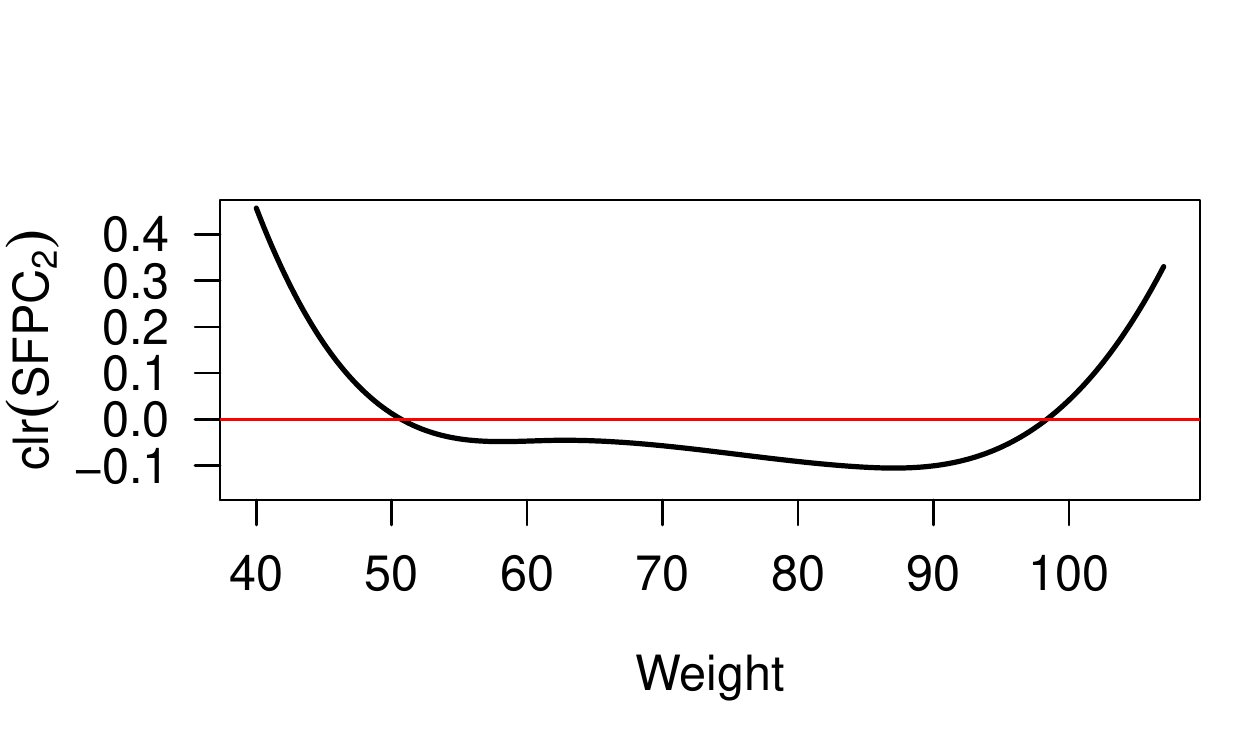}
        \caption{SFPC$_2$ ($23.3\%$ of variability).}
				\label{fig:sfpca-har2}
	\end{subfigure}
    \begin{subfigure}[t]{1\textwidth}
        \centering
        \includegraphics[width=0.41\textwidth]{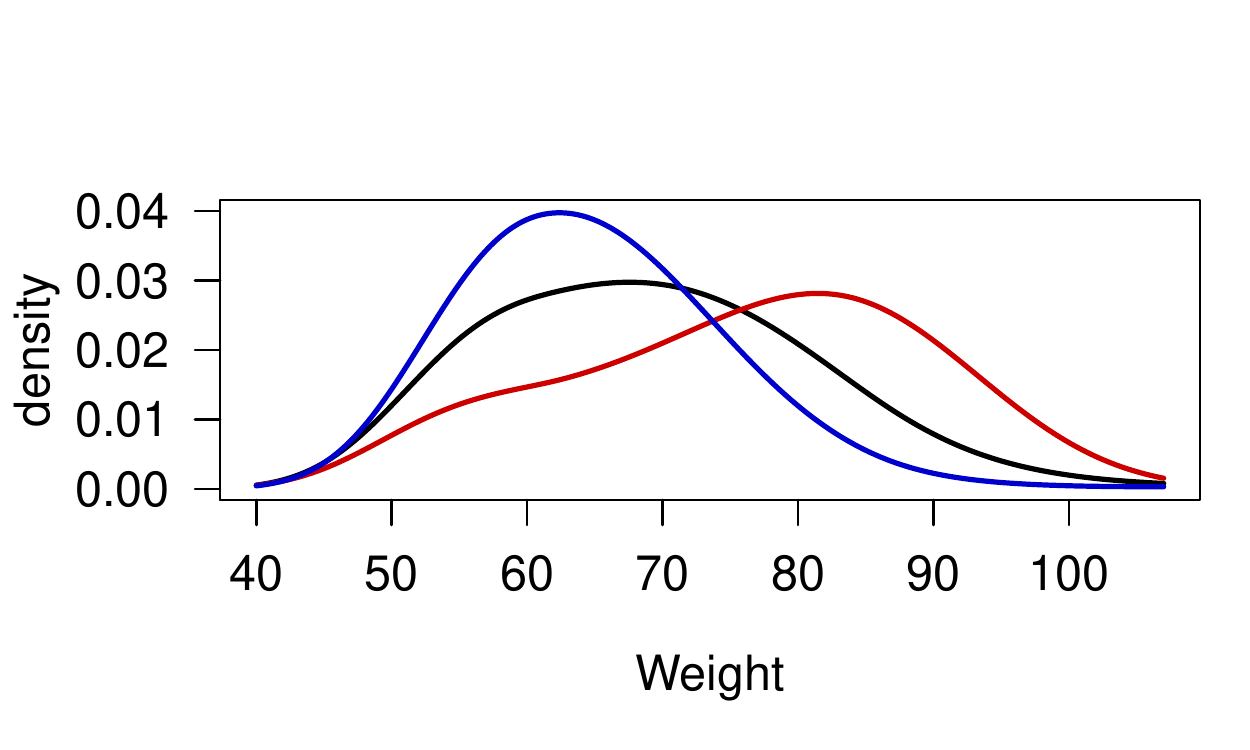}\includegraphics[width=0.41\textwidth]{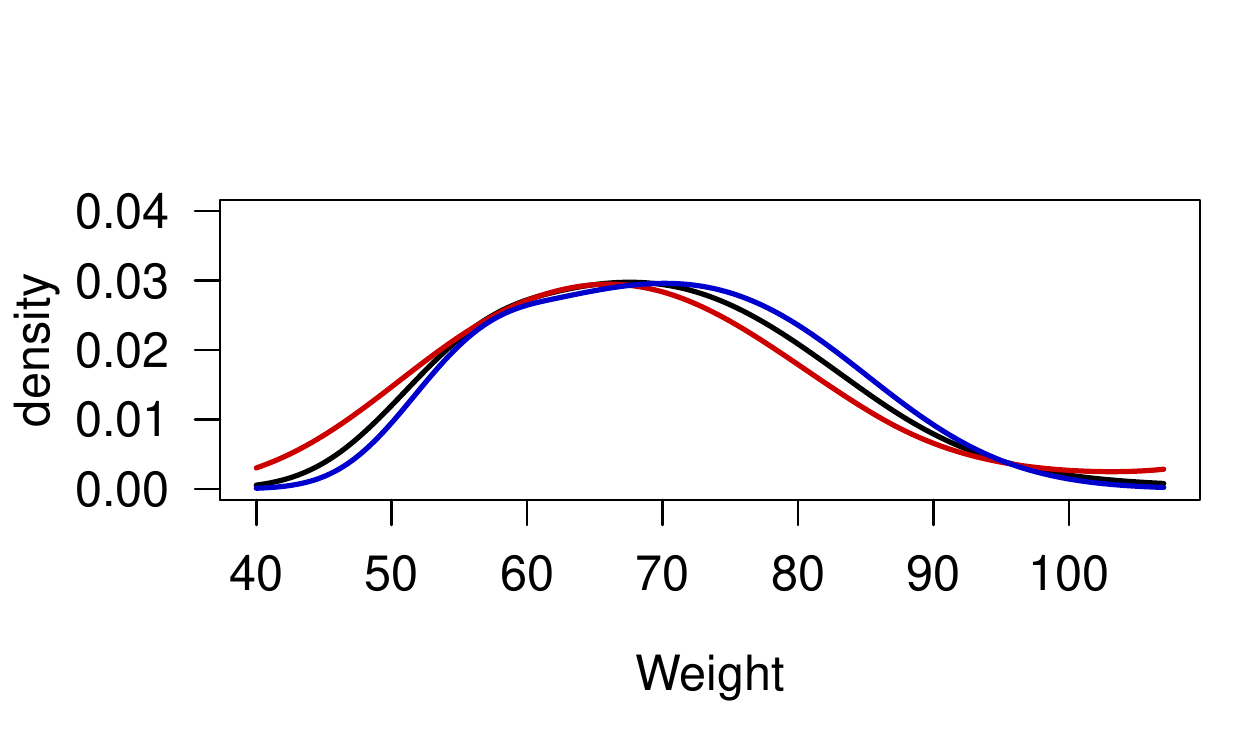}
        \caption{Variability around the mean weight density function ($\bar{f} \oplus / \ominus 2\sqrt{\rho_{\kappa}} \odot \text{SFPC}_{\kappa}, {\kappa}=1,2$).}
				\label{fig:sfpc-harm12}
	\end{subfigure}
		\begin{subfigure}[t]{0.41\textwidth}
                \centering
        \includegraphics[width=\textwidth]{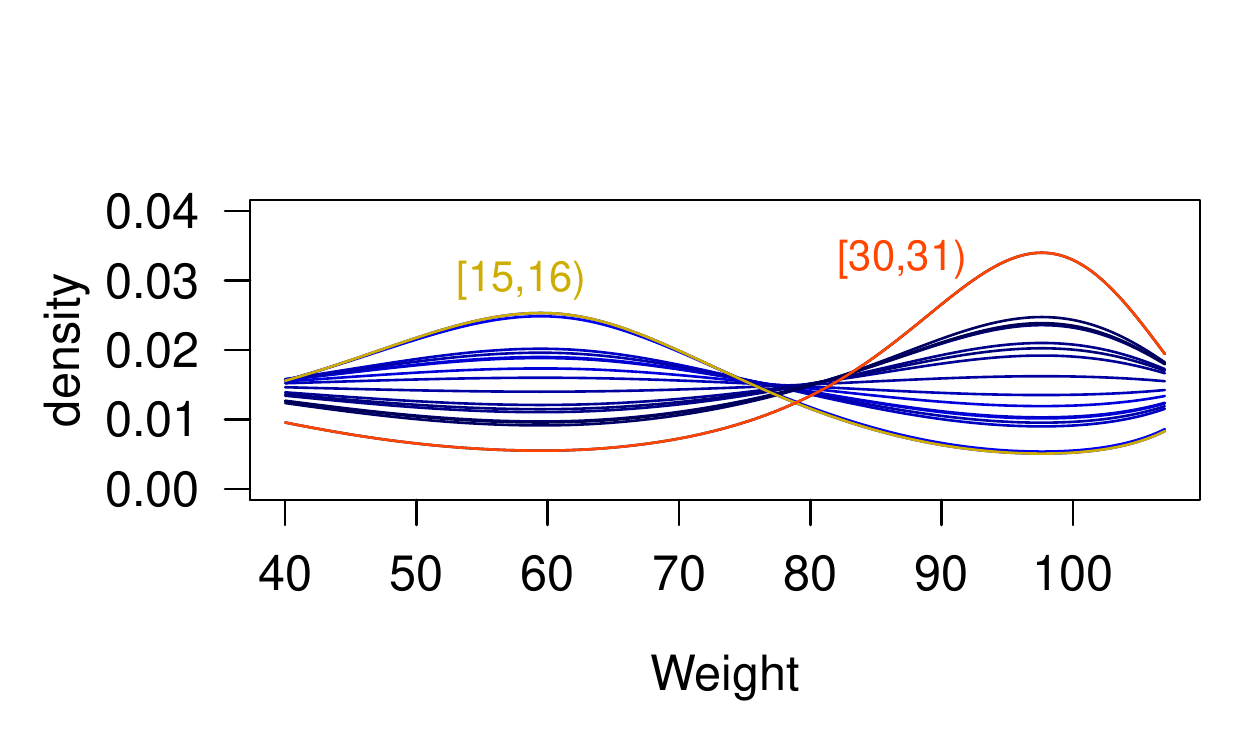}
				\caption{Projection using only SFPC$_1$.}
				\label{fig:sfpca-proj1}
		\end{subfigure}
		\begin{subfigure}[t]{0.41\textwidth}
                \centering
                \includegraphics[width=\textwidth]{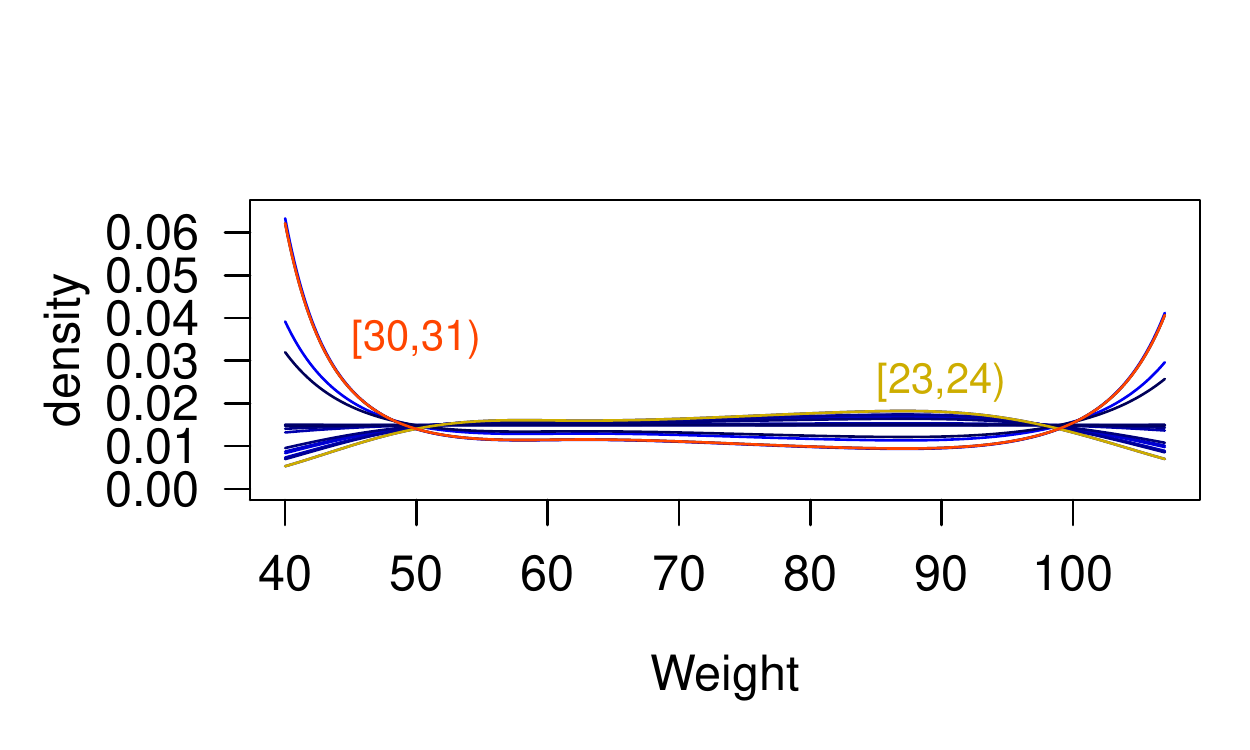}
				\caption{Projection using only SFPC$_2$.}
				\label{fig:sfpca-proj2}
		\end{subfigure}
\caption{SFPCA results for weight density functions. In panel (e), the red curve indicates adding ($\oplus$) of $2\sqrt{\rho_\kappa}$ multiple of SFPC$_\kappa$ and the blue curve indicates subtracting ($\ominus$) of $2\sqrt{\rho_\kappa}$ multiple of SFPC$_\nu$ to the overall mean weight density function $\bar{f}(t)$, $t \in I$, indicated by the black curve (left: $\kappa=1$ and right: $\kappa=2$, respectively); $\rho_{\kappa}$ is the standard deviation along SFPC$_{\kappa}$, ${\kappa}=1,2$.}		
\label{fig:sfpca}
\end{figure}

{\renewcommand{\arraystretch}{0.9}
\begin{table}
\caption{Histogram data for four age groups: $\left[15,16\right), \left[22,23\right), \left[23,24\right)$ and $\left[30,31\right)$. $\textbf{f}_i$ are raw density values at midpoints $\textbf{t}_i=\left(t_{i,1},\ldots,t_{i,q_i}\right)^{\top}$ of weight classes with proportions $\textbf{p}_i$ for $i=1,8,9,16$; $q_i$ indicates the number of the weight classes.}
{\scriptsize
\centering
\begin{tabular}{l || l rrrrrrrrrr | l}
	 \multirow{3}{*}{$\left[15,16\right)$} &
	 $\textbf{p}_{1}$ & 0.0656 &   0.2625 &   0.3375 &   0.2156 &   0.0750 &   0.0281 &   0.0094 &   0.0062 & & &\multirow{3}{*}{$q_1 = 8$}\\
   &$\textbf{f}_{1}$ & 0.0075 &   0.0300 &   0.0386 &   0.0246 &   0.0086 &   0.0032 &   0.0011 &   0.0007 & & &\\
   &$\textbf{t}_{1}$ & 44.375 &   53.125 &   61.875 &   70.625 &   79.375 &   88.125 &   96.875 &  105.625 & & &\\
	 \hline
   \multirow{3}{*}{$\left[22,23\right)$} &
	 $\textbf{p}_{8}$ & 0.0156 &   0.0869 &   0.1804 &   0.2138 &   0.2272 &   0.1514 &   0.0935 &   0.0200 &   0.0067 &   0.0045 & \multirow{3}{*}{$q_8 = 10$}\\
   &$\textbf{f}_{8}$ & 0.0022 &   0.0124 &   0.0258 &   0.0305 &   0.0325 &   0.0216 &   0.0134 &   0.0029 &   0.0010 &   0.0006 &\\
   &$\textbf{t}_{8}$ & 43.5   &   50.5   &     57.5 &     64.5 &     71.5 &     78.5 &     85.5 &     92.5 &     99.5 &    106.5 &\\
	 \hline
	 \multirow{3}{*}{$\left[23,24\right)$} &
	 $\textbf{p}_{9}$ & 0.0078 &   0.0908 &   0.2100 &   0.1971 &   0.1659 &   0.1659 &   0.1011 &   0.0259 &   0.0337 &   0.0017  & \multirow{3}{*}{$q_9 = 10$}\\
   &$\textbf{f}_{9}$ & 0.0011 &   0.0130 &   0.0300 &   0.0282 &   0.0237 &   0.0237 &   0.0144 &   0.0037 &   0.0048 &   0.0002 & \\
   &$\textbf{t}_{9}$ & 43.5   &  50.5    &  57.5    &  64.5    &  71.5    &  78.5    &  85.5    &  92.5    &  99.5    & 106.5    & \\
	 \hline
   \multirow{3}{*}{$\left[30,31\right)$} &
   $\textbf{p}_{16}$ & 0.0568 &   0.1023 &   0.2045 &   0.2386 &   0.2159 &   0.1364 &   0.0455 &&&& \multirow{3}{*}{$q_{16} = 7$}\\
   &$\textbf{f}_{16}$ & 0.0057 &   0.0102 &   0.0205 &   0.0239 &   0.0216 &   0.0136 &   0.0045 &&&&\\
   &$\textbf{t}_{16}$ & 45.0 &  55.0 &  65.0 &  75.0 &  85.0 &  95.0 & 105.0 &&&&\\
\end{tabular}
}
\label{tab:rawdata}
\end{table}
}

{\renewcommand{\arraystretch}{0.8}
\begin{table}
\caption{Input data for smoothing procedure: $\clr(\textbf{f}_{i})$ are raw clr density values at midpoints $\textbf{t}_i=\left(t_{i,1},\ldots,t_{i,q_i}\right)^{\top}$ of the weight classes for $i=1,2,\ldots,N$; $q_i$ indicates the number of the weight classes.
}
{\scriptsize
\centering
\begin{tabular}{l || l rrrrrrrrrr | l}
	 \multirow{2}{*}{$\left[15,16\right)$} &
   $\clr\left[\textbf{f}_{1}\right]$ & 0.100 &    1.486 &     1.737 &   1.289 &      0.233 &  -0.748 &  -1.846 &  -2.252 & & &\multirow{2}{*}{$q_1 = 8$}\\
   &$\textbf{t}_{1}$ & 44.375 &   53.125 &   61.875 &   70.625 &   79.375 &   88.125 &   96.875 &  105.625 & & &\\
	 \hline
	 \multirow{2}{*}{$\left[16,17\right)$} &
   $\clr\left[\textbf{f}_{2}\right]$ & -0.210 &   1.217 &   1.760 &   1.636 &   0.396 &  -0.392 &  -2.001 &  -2.407 & & &\multirow{2}{*}{$q_2 = 8$}\\
   &$\textbf{t}_{2}$ & 44.375 &   53.125 &   61.875 &   70.625 &   79.375 &   88.125 &   96.875 &  105.625 & & &\\
	 \hline
	 \multirow{2}{*}{$\left[17,18\right)$} &
   $\clr\left[\textbf{f}_{3}\right]$ & -1.375 &   0.570 &   1.316 &   1.669 &   1.381 &   0.534 &  -0.364 &  -2.069 &  -1.663 &  &\multirow{2}{*}{$q_3 = 9$}\\
   &$\textbf{t}_{3}$ & 43.889 &  51.667 &  59.444 &  67.222 &  75.000 &  82.778 &  90.556 &  98.333 & 106.111 & & \\
	 \hline
		 \multirow{2}{*}{$\left[18,19\right)$} &
   $\clr\left[\textbf{f}_{4}\right]$ & -1.354 &   0.592 &   1.419 &   1.443 &   1.406 &   1.131 &   0.563 &  -0.661 &  -1.171 &  -3.369  &\multirow{2}{*}{$q_4 = 10$}\\
   &$\textbf{t}_{4}$ & 43.5 &  50.5 &  57.5 &  64.5 &  71.5 &  78.5 &  85.5 &  92.5 &  99.5 & 106.5 &  \\
	 \hline
			\multirow{2}{*}{$\left[19,20\right)$} &
   $\clr\left[\textbf{f}_{5}\right]$ & -1.536 &   0.628 &   1.408 &   1.555 &   1.535 &   1.209 &   0.302 &  -0.774 &  -1.536 &  -2.789  &\multirow{2}{*}{$q_5 = 10$}\\
   &$\textbf{t}_{5}$ & 43.5 &  50.5 &  57.5 &  64.5 &  71.5 &  78.5 &  85.5 &  92.5 &  99.5 & 106.5 &  \\
	 \hline
			\multirow{2}{*}{$\left[20,21\right)$} &
   $\clr\left[\textbf{f}_{6}\right]$ & -1.341 &   0.674 &   1.333 &   1.558 &   1.638 &   1.452 &   0.422 &  -0.568 &  -2.034 &  -3.133  &\multirow{2}{*}{$q_6 = 10$}\\
   &$\textbf{t}_{6}$ & 43.5 &  50.5 &  57.5 &  64.5 &  71.5 &  78.5 &  85.5 &  92.5 &  99.5 & 106.5 &  \\
	 \hline
		 \multirow{2}{*}{$\left[21,22\right)$} &
   $\clr\left[\textbf{f}_{7}\right]$ & -1.746 &   0.451 &   1.185 &   1.463 &   1.411 &   1.131 &   0.531 &  -0.242 &  -1.746 &  -2.439  &\multirow{2}{*}{$q_7 = 10$}\\
   &$\textbf{t}_{7}$ & 43.5 &  50.5 &  57.5 &  64.5 &  71.5 &  78.5 &  85.5 &  92.5 &  99.5 & 106.5 &  \\
	 \hline
		 \multirow{2}{*}{$\left[22,23\right)$} &
   $\clr\left[\textbf{f}_{8}\right]$ & -1.168 &   0.550 &   1.281 &   1.450 &   1.511 &   1.106 &   0.624 &  -0.917 &  -2.015 &  -2.421  &\multirow{2}{*}{$q_8 = 10$}\\
   &$\textbf{t}_{8}$ & 43.5 &  50.5 &  57.5 &  64.5 &  71.5 &  78.5 &  85.5 &  92.5 &  99.5 & 106.5 &  \\
	 \hline
			 \multirow{2}{*}{$\left[23,24\right)$} &
   $\clr\left[\textbf{f}_{9}\right]$ & -1.884 &   0.573 &   1.412 &   1.348 &   1.177 &   1.177 &   0.681 &  -0.680 &  -0.417 &  -3.388  &\multirow{2}{*}{$q_9 = 10$}\\
   &$\textbf{t}_{9}$ & 43.5 &  50.5 &  57.5 &  64.5 &  71.5 &  78.5 &  85.5 &  92.5 &  99.5 & 106.5 &  \\
	 \hline
		\multirow{2}{*}{$\left[24,25\right)$} &
   $\clr\left[\textbf{f}_{10}\right]$ & -1.602 &   0.595 &   1.186 &   1.274 &   1.106 &   0.796 &   0.056 &  -0.423 &  -2.988 &  &\multirow{2}{*}{$q_{10} = 9$}\\
   &$\textbf{t}_{10}$ & 43.889 &  51.667 &  59.444 &  67.222 &  75.000 &  82.778 &  90.556 &  98.333 & 106.111 & & \\
	 \hline
		\multirow{2}{*}{$\left[25,26\right)$} &
   $\clr\left[\textbf{f}_{11}\right]$ &  -1.401 &   0.471 &   0.768 &   0.824 &   1.145 &   0.850 &   0.209 &  -1.178 &  -1.688 &  &\multirow{2}{*}{$q_{11} = 9$}\\
   &$\textbf{t}_{11}$ & 43.889 &  51.667 &  59.444 &  67.222 &  75.000 &  82.778 &  90.556 &  98.333 & 106.111 & & \\
	 \hline
		 \multirow{2}{*}{$\left[26,27\right)$} &
   $\clr\left[\textbf{f}_{12}\right]$ & -1.045 &   0.513 &   0.901 &   1.180 &   1.258 &   0.513 &  -0.485 &  -2.836 & & &\multirow{2}{*}{$q_{12} = 8$}\\
   &$\textbf{t}_{12}$ & 44.375 &   53.125 &   61.875 &   70.625 &   79.375 &   88.125 &   96.875 &  105.625 & & &\\
	 \hline
			\multirow{2}{*}{$\left[27,28\right)$} &
   $\clr\left[\textbf{f}_{13}\right]$ & -0.816 &   0.570 &   0.742 &   0.742 &   1.056 &   0.570 &  -0.256 &  -2.608 & & &\multirow{2}{*}{$q_{13} = 8$}\\
   &$\textbf{t}_{13}$ & 44.375 &   53.125 &   61.875 &   70.625 &   79.375 &   88.125 &   96.875 &  105.625 & & &\\
	 \hline
			\multirow{2}{*}{$\left[28,29\right)$} &
   $\clr\left[\textbf{f}_{14}\right]$ & -1.155 &   0.579 &   0.790 &   0.965 &   0.690 &  -0.308 &  -1.561 & & & &\multirow{2}{*}{$q_{14} = 7$}\\
   &$\textbf{t}_{14}$ & 45.0 &  55.0 &  65.0 &  75.0 &  85.0 &  95.0 & 105.0 & & & &\\
	 \hline
			\multirow{2}{*}{$\left[29,30\right)$} &
   $\clr\left[\textbf{f}_{15}\right]$ & -1.060 &   0.480 &   0.837 &   0.674 &   0.614 &  -0.773 &  -0.773 & & & &\multirow{2}{*}{$q_{15} = 7$}\\
   &$\textbf{t}_{15}$ & 45.0 &  55.0 &  65.0 &  75.0 &  85.0 &  95.0 & 105.0 & & & &\\
	 \hline
			\multirow{2}{*}{$\left[30,31\right)$} &
   $\clr\left[\textbf{f}_{16}\right]$ & --0.756 &  -0.168 &   0.525 &   0.679 &   0.579 &   0.120 &  -0.979 & & & &\multirow{2}{*}{$q_{16} = 7$}\\
   &$\textbf{t}_{16}$ & 45.0 &  55.0 &  65.0 &  75.0 &  85.0 &  95.0 & 105.0 & & & &\\
\end{tabular}
}
\label{tab:clrdata}
\end{table}
}

{\renewcommand{\arraystretch}{0.9}
\begin{table}
\caption{$ZB$-spline coefficients for clr transformed density functions of $N=16$ age groups.}
\centering
{\footnotesize
\begin{tabular}{l||R{1.3cm}R{1.3cm}R{1.3cm}R{1.3cm}R{1.3cm}}
	age group &  \multicolumn{5}{ c }{ spline coefficients, $\; \textbf{z}_i = \left(z_{i,-3}, \ldots,z_{i,1}\right)^{\top}, i=1,\ldots,N$}\\
\hline
 $\left[15,16\right)$ &  -6.950 &   6.647 &  46.536 &  40.973 &  13.163 \\
 $\left[16,17\right)$ &  -7.806 &  -0.596 &  41.616 &  45.181 &  14.083 \\
 $\left[17,18\right)$ & -16.677 & -11.292 &  18.284 &  43.917 &   9.102 \\
 $\left[18,19\right)$ & -17.067 &  -8.988 &  21.373 &  33.533 &  20.188 \\
 $\left[19,20\right)$ & -18.483 &  -9.902 &  22.408 &  38.249 &  16.447 \\
 $\left[20,21\right)$ & -17.242 &  -7.010 &  18.199 &  46.788 &  18.682 \\
 $\left[21,22\right)$ & -20.452 & -10.875 &  11.653 &  36.887 &  14.797 \\
 $\left[22,23\right)$ & -15.236 &  -5.368 &  16.735 &  46.421 &  14.071 \\
 $\left[23,24\right)$ & -22.485 & -12.348 &  17.033 &  23.450 &  20.153 \\
 $\left[24,25\right)$ & -19.873 & -14.176 &  13.567 &  20.115 &  19.448 \\
 $\left[25,26\right)$ & -19.011 &  -5.949 &  -4.623 &  30.860 &   9.973 \\
 $\left[26,27\right)$ & -14.997 & -10.545 &   2.638 &  28.225 &  19.143 \\
 $\left[27,28\right)$ & -14.461 &  -4.455 &  -0.689 &  21.892 &  18.070 \\
 $\left[28,29\right)$ & -18.518 & -11.045 &  -2.723 &  21.744 &  10.395 \\
 $\left[29,30\right)$ & -16.445 &  -9.417 &  -1.814 &  23.562 &   2.889 \\
 $\left[30,31\right)$ &  -5.077 & -15.534 &  -4.171 &   8.220 &   7.618 \\
\end{tabular}
}
\label{tab:ZB-coef}
\end{table}
}

{\renewcommand{\arraystretch}{0.9}
\begin{table}
\caption{$B$-spline coefficients for clr transformed density functions of $N=16$ age groups.}
\centering
{\footnotesize
\begin{tabular}{l||R{1cm}R{1cm}R{1cm}R{1cm}R{1cm}R{1cm}}
	age group &  \multicolumn{6}{ c }{spline coefficients, $\; \textbf{b}_i= \left(b_{i,-3}, \ldots, b_{i,2}\right)^{\top}, i=1,\ldots,N$} \\
\hline
$\left[15,16\right)$ &  -1.264 &  1.236 &  2.381 & -0.332 & -2.472 & -2.289 \\
$\left[16,17\right)$ &  -1.419 &  0.655 &  2.520 &  0.213 & -2.764 & -2.449 \\
$\left[17,18\right)$ &  -3.032 &  0.490 &  1.766 &  1.530 & -3.095 & -1.583 \\
$\left[18,19\right)$ &  -3.103 &  0.734 &  1.813 &  0.726 & -1.186 & -3.511 \\
$\left[19,20\right)$ &  -3.361 &  0.780 &  1.929 &  0.946 & -1.938 & -2.860 \\
$\left[20,21\right)$ &  -3.135 &  0.930 &  1.505 &  1.707 & -2.498 & -3.249 \\
$\left[21,22\right)$ &  -3.719 &  0.871 &  1.345 &  1.507 & -1.964 & -2.573 \\
$\left[22,23\right)$ &  -2.770 &  0.897 &  1.320 &  1.772 & -2.876 & -2.447 \\
$\left[23,24\right)$ &  -4.088 &  0.922 &  1.754 &  0.383 & -0.293 & -3.505 \\
$\left[24,25\right)$ &  -3.613 &  0.518 &  1.656 &  0.391 & -0.059 & -3.382 \\
$\left[25,26\right)$ &  -3.456 &  1.187 &  0.079 &  2.118 & -1.857 & -1.734 \\
$\left[26,27\right)$ &  -2.727 &  0.405 &  0.787 &  1.528 & -0.807 & -3.329 \\
$\left[27,28\right)$ &  -2.629 &  0.910 &  0.225 &  1.348 & -0.340 & -3.143 \\
$\left[28,29\right)$ &  -3.367 &  0.679 &  0.497 &  1.461 & -1.009 & -1.808 \\
$\left[29,30\right)$ &  -2.990 &  0.639 &  0.454 &  1.515 & -1.838 & -0.502 \\
$\left[30,31\right)$ &  -0.923 & -0.951 &  0.678 &  0.740 & -0.053 & -1.325 \\
\end{tabular}
}
\label{tab:B-coef}
\end{table}
}

\section{Conclusions}

The compositional splines, which enable to construct a spline basis in the clr space of density functions 
($ZB$-spline basis) 
and consequently also in the original space of densities 
($CB$-spline basis), might become an important contribution within the Bayes space methodology for processing of functional data carrying relative information. They provide a solid theoretical base for further developments of the approximation theory in context of the Bayes spaces, but even more importantly, compositional splines can be used also for adaptation of popular methods of functional data analysis for density functions. Here the case of compositional functional principal component analysis was presented, but similarly, e.g., regression analysis or classification methods could be developed. Also further tuning of the compositional splines is possible, here represented by the smoothing compositional splines or by orthonormalization of the $ZB$-basis. The latter case be used for an orthogonal projection of a density function on a subset of $CB$-splines, to further applications within the approximation theory or also for development of the theoretical framework of functional data analysis.

The pending challenge is to generalize the methodology introduced above also to $p$ dimensional density functions, $p>1$, which can be formally extended from any univariate density $f(x),\,x\in I=[a,b]$ to $f(\mathbf{x})$, where $\mathbf{x}=(x_1,\ldots,x_p)^{\top}\in \mathcal{I}=I_1\times\ldots\times I_p=[a_1,b_1]\times\ldots\times[a_p,b_p]$, in Equations (\ref{operations}) to (\ref{bexpansion}); $\eta=b-a$ would be replaced by $H=\prod_{i=1}^p(b_i-a_i)$. Currently an approach which focuses on keeping the zero integral constraint of the clr transformed densities was developed in \cite{guean18} as a generalization of \cite{Mach}, which, however, does not lead to a compositional counterpart of the $B$-spline basis. A consistent approach in this direction is currently under development.

\section*{Acknowledgements}

The authors gratefully acknowledge both the support by Czech Science Foundation
GA18-09188S, the grant IGA\_PrF\_2018\_024 Mathematical Models of the
Internal Grant Agency of the Palack\'y University in Olomouc, and the grant COST
Action CRoNoS IC1408.

\end{document}